\documentclass[12pt]{amsart}
\usepackage{amsmath, amssymb, amsthm, color, bbold, url, extarrows}
\usepackage{MnSymbol}
\usepackage[margin=1in]{geometry}

\usepackage{dsfont}

\usepackage{graphicx}
\usepackage[all, cmtip]{xy}
\usepackage[utf8]{inputenc}
\usepackage{url}

\usepackage{lineno}

\usepackage{enumerate}
%\usepackage[all, cmtip]{xy}
%\usepackage[utf8]{inputenc}
%\usepackage{url}

%\usepackage[style=alphabetic,natbib=true, backend=bibtex, maxnames=99, maxalphanames=5]{biblatex}

%\bibliographystyle{myspmpsci}
%\bibliography{paper}

%\addbibresource{paper.bib}

\usepackage{hyperref}
\hypersetup{
    bookmarks=true,         % show bookmarks bar?
    unicode=false,          % non-Latin characters in Acrobat?s bookmarks
    pdftoolbar=true,        % show Acrobat?s toolbar?
    pdfmenubar=true,        % show Acrobat?s menu?
    pdffitwindow=false,     % window fit to page when opened
    pdfstartview={FitH},    % fits the width of the page to the window
    pdftitle={My title},    % title
    pdfauthor={Author},     % author
    pdfsubject={Subject},   % subject of the document
    pdfcreator={Creator},   % creator of the document
    pdfproducer={Producer}, % producer of the document
    pdfkeywords={keyword1} {key2} {key3}, % list of keywords
    pdfnewwindow=true,      % links in new PDF window
    colorlinks=true,       % false: boxed links; true: colored links
    linkcolor=black,          % color of internal links (change box color with linkbordercolor)
    citecolor=cyan,        % color of links to bibliography
    filecolor=magenta,      % color of file links
    urlcolor=cyan           % color of external links
}

\newtheorem{thm}{Theorem}[section]

\newtheorem{cor}[thm]{Corollary}
\newtheorem{lemma}[thm]{Lemma}
\newtheorem{prop}[thm]{Proposition}

\setlength\parindent{0pt}
\setlength{\parskip}{1em}

\theoremstyle{rmk}
\theoremstyle{definition}

\newtheorem{rmk}[thm]{Remark}

\numberwithin{equation}{section}

\sloppy

\def\C{\mathbb C}

\def\kk{\mathbb k}

\def\ga{\mathfrak g}

\def\ra{\mathfrak r}
\def\rau{{\mathfrak r}_{\rm u}}

\def\aa{\mathfrak a}

\def\ha{\mathfrak h}
\def\ea{\mathfrak e}
\def\fa{\mathfrak f}

\def\gl{\mathfrak{gl}}

\def\wt{\widetilde}
\def\ol{\overline}

\def\GL{\operatorname{GL}}

\def\SL{\operatorname{SL}}

\def\Sp{\operatorname{Sp}}
\def\SO{\operatorname{SO}}
\def\OO{\operatorname{O}}
\def\Ru{\operatorname{R_u}}

\def\P{\operatorname{P}}

\def\Lie{\operatorname{Lie}}

\DeclareMathOperator{\Ker}{\operatorname{Ker}}

\renewcommand{\mathbb}{\mathds}

% macros for sets
\newcommand{\CC}{\mathbb{C}}

\newcommand{\ZZ}{\mathbb{Z}} 

% macros for groups
\newcommand{\group}{\mathrm}

\newcommand{\E}{\group{E}}
\newcommand{\F}{\group{F}}
\newcommand{\Zen}{\mathrm{Z}}

\newcommand{\Mat}{\mathrm{Mat}}

\renewcommand{\rho}{\varrho}
\renewcommand{\phi}{\varphi}

\newcommand{\pre}{prehomogeneous}

\newcommand{\et}{{\'e}tale}

\definecolor{todo}{rgb}{1,0,0}

% Comments in color

%\newcommand{\WG[1]{}
%\newcommand{\DB}[1]{}
%\newcommand{\AM}[1]{}

%\author{Dietrich Burde \and Wolfgang Globle \and Andrei Minchenko}

\author[Burde]{Dietrich Burde}
\address{Dietrich Burde,
Faculty of Mathematics\\
University of Vienna\\
Oskar-Morgenstern-Platz 1\\
1090 Vienna\\
Austria}
\email{dietrich.burde@univie.ac.at}

\author[Globke]{Wolfgang Globke}
\address{Wolfgang Globke,
School of Mathematical Sciences\\
The University of Adelaide\\
SA 5005\\
Australia}
\email{wolfgang.globke@adelaide.edu.au}

\author[Minchenko]{Andrei Minchenko}
\address{Andrei Minchenko,
Faculty of Mathematics\\
University of Vienna\\
Oskar-Morgenstern-Platz 1\\
1090 Vienna\\
Austria}
\email{andrei.minchenko@univie.ac.at}

\setcounter{tocdepth}{1}

\subjclass[2010]{Primary 32M10; Secondary 20G05, 20G20}

\title[{\'E}tale representations]{{\'E}tale representations for reductive algebraic groups with factors $\boldsymbol{\Sp_n}$ or $\boldsymbol{\SO_n}$}

\begin{document}
%	\large
	\maketitle
%	\tableofcontents
%
%\linenumbers
%

\begin{abstract}
An \'etale module for a linear algebraic group $G$ is a
complex vector space $V$ with a rational $G$-action on $V$
that has a Zariski-open orbit and $\dim G=\dim V$.
%A complex vector space $V$ is an \'etale $G$-module if
%a linear algebraic group $G$ acts rationally on $V$ with a
%Zariski-open orbit and $\dim G=\dim V$.
Such a module is called super-\'etale if the stabilizer of a
point in the open orbit is trivial.
Popov (2013) proved that
reductive algebraic groups admitting super-\'etale modules are
special algebraic groups.
He further conjectured that a reductive group admitting a
super-\'etale module
is always isomorphic to a product of general linear groups.
%In light of previously available examples, one can conjecture
%more generally that in such a group all
%simple factors are either $\SL_n$ for some $n$ or $\Sp_2$.
Our main result is the construction of counterexamples
to this conjecture, namely a family of
super-\'etale modules for groups with a factor $\Sp_n$
for arbitrary $n\geq1$.
A similar construction provides a family of \'etale modules
for groups with a factor $\SO_n$, which shows that groups with
\'etale modules with non-trivial stabilizer are not necessarily
special.
Both families of examples are somewhat surprising in light of the
previously known examples of \'etale and super-\'etale modules
for reductive groups.
Finally, we show that the exceptional groups $\F_4$ and $\E_8$
cannot appear as simple factors in the maximal semisimple subgroup
of an arbitrary Lie group with a linear \'etale representation.
\end{abstract}

%%%%%%%%%
\section*{Introduction}

An \emph{\'etale module} $(G,\rho,V)$ for an algebraic
group $G$ is a finite-dimensional complex vector space $V$
together with a rational representation $\rho:G\to\GL(V)$
such that $\rho(G)$ has a Zariski-open orbit in $V$ and
$\dim G=\dim V$.
In particular, the stabilizer $H$ of any point in the open orbit
is a finite subgroup of $G$.
If $H$ is the trivial group, the module is called
\emph{super-\'etale}.
Similarly we call the representation $\rho$ \'etale or
super-\'etale, respectively.
More generally, one can study affine \'etale representations
(that is, representations by affine transformations),
but for rational representations of reductive algebraic
groups these are equivalent to linear ones via affine changes
of coordinates.
As we are primarily interested in this case, we shall restrict ourselves to linear representations.

The existence of an affine \'etale representation for a given
group $G$
implies the existence of a left-invariant flat affine
connection on $G$, and these structures appear in many different
contexts in mathematics. For the specifics of this relationship
and a survey of applications,
see Burde \cite{Burde96, Burde06}, Baues \cite{Baues99} and the
references therein.
The primary motivation for the present work is Popov's study of linearizable subgroups of
the Cremona group on affine $n$-space (those that are conjugate to 
linear group within the Cremona group).
Subgroups for which a super-\'etale module exists,
called \emph{flattenable} groups by Popov, allow particularly
convenient criteria to decide their linearizability, compare the
results in \cite[Section 2]{Popov13}.
Incidentally, a flattenable group $G$ is precisely a group that
admits a rational super-\'etale module.
Popov \cite[Lemma 2]{Popov13} proved (in our terminology):

%\begin{thm}[Popov]\label{thm:popov}
{\em A reductive algebraic group admitting a super-\'etale module
is a special algebraic group.}
%\end{thm}

By definition, $G$ is \emph{special} (in the sense of Serre)
if every principal $G$-bundle is locally trivial in the \'etale
topology. Serre \cite[4.1]{Serre58} showed that every special group
is connected and linear, and that reductive groups with maximal
connected semisimple subgroup
\[
S(G) = \SL_{n_1}\times\cdots\times\SL_{n_k}\times\Sp_{m_1}\times\cdots\times\Sp_{m_j}
\]
are special.
A result of
Grothendieck \cite[Th\'eor\`eme 3]{Groth58} then implies that an
affine algebraic group $G$ is special if and only if a maximal
connected semisimple subgroup is isomorphic to a group of this
type.
%The connection between special algebraic groups and super-\'etale
%modules is found in the work of Popov \cite{Popov13}.
%He studies algebraic subgroups of the Cremona
%group (the group of birational automorphisms of affine $n$-space).
%One fundamental problem is whether a subgroup $G$ is
%\emph{linearizable}, that is, whether $G$ is conjugate to a linear
%group within the Cremona group.
%He introduces the notion of \emph{flattenable} groups as a
%class of subgroups that turns out to allow particularly convenient
%criteria to decide their linearizability, compare the results in
%\cite[Section 2]{Popov13}.
%
%Incidentally, a flattenable group $G$ is precisely a group that
%admits a rational super-\'etale module.
%Popov \cite[Lemma 2]{Popov13} proved (in our terminology):
%
%%\begin{thm}[Popov]\label{thm:popov}
%{\em A reductive algebraic group admitting a super-\'etale module
%is a special algebraic group $G$.}
%%\end{thm}
This result and the available examples lead Popov to make the
following conjecture:

{\em A reductive algebraic group $G$ has a rational super-\'etale
module if and only if}
\[
G\cong\GL_{n_1}\times\cdots\times\GL_{n_k}.
\]
Clearly, every group $\GL_{n_1}\times\cdots\times\GL_{n_k}$
has a super-\'etale module $\Mat_{n_1}\oplus\ldots\oplus\Mat_{n_k}$
on which it acts factorwise by matrix multiplication.
In previously available classification
results on \'etale modules for reductive algebraic groups $G$,
the only simple groups appearing as factors in $G$ are $\SL_n$
and $\Sp_2$ (see Burde and Globke \cite[Section 5]{BG16}
for a summary).
This suggests the more general questions of whether
{\em in a reductive algebraic group with a rational
super-\'etale module, all simple factors are either $\Sp_2$ or
$\SL_n$ for certain $n\geq 2$.}
%This question is the motivation for the present work.
Somewhat surprisingly, this (and thus Popov's original
conjecture) turns out to be false. Our main result is the
existence of counterexamples to this conjecture, constructed in Section \ref{sec:Spn} below show. These examples consist of a
family of super-\'etale modules for reductive groups
$G=\Sp_n\times\GL_{2n-1}\times\cdots\times\GL_1$ for any $n\geq 1$. 
So in fact any factor $\SL_n$ or $\Sp_n$ for any $n\geq 1$ can
appear in a group with a super-\'etale module.
One might now be tempted to ask whether every special reductive
algebraic group admits a super-\'etale module, but this can
immediately be ruled out by comparison with classification results of reductive groups
with few simple factors, see again \cite[Section 5]{BG16}.

Knowing that algebraic groups with super-\'etale modules are
special, one can further suspect that the same holds for groups
with \'etale modules that have non-trivial stabilizer.
%
%{\em If $G$ is a reductive algebraic groups with an \'etale
%module, is $G$ a special algebraic group?}
%
Again we find the surprising answer that this is not true.
In Section \ref{sec:SOn} below we construct a family of \'etale
modules for reductive groups
$G=\SO_n\times\GL_{n-1}\times\cdots\times\GL_1$ for any
$n\geq 2$.
These are the first known examples of \'etale
modules for groups with a simple factor $\SO_n$ for any number
$n\geq2$.

These two families are the first known examples of \'etale modules
for reductive groups containing factors $\Sp_n$ or $\SO_n$ for
arbitrary $n>2$.
This still leaves the question of whether there exist \'etale
modules for reductive groups with exceptional simple groups as
factors.
In Section \ref{sec:no_F4E8}, we show in a much more general
setting that a simple Lie group whose complexified Lie algebra is
one of the exceptional algebras $\fa_4$ or $\ea_8$ cannot appear
among the simple factors in a maximal semisimple subgroup of a
Lie group with a linear \'etale representation, not necessarily algebraic
(here, \emph{\'etale} means that the action has an orbit that is
open in the standard topology of the module).
For the other exceptional groups, this question remains open.

A remark on the previously available classification results on
\'etale modules is in order.
As these results use the classification results on
prehomogeneous modules due to Sato, Kimura and others (see Kimura's book \cite[Chapter 7]{Kimura03} and references
therein),
they very often rely on Lie algebraic methods.
In most cases it is not immediately clear from their
classifications
whether the generic stabilizers are trivial, although many
generic stabilizers (not just their identity component) are
explicitely given in the appendix of \cite{Kimura03}.

%%%
\subsection*{Notations and conventions}

All algebraic groups, such as $\GL_n$, $\SL_n$, $\SO_n$
and $\Sp_n$, are considered over the complex numbers unless
otherwise stated.
We follow the convention that $\Sp_n$ means the symplectic
group on $\CC^{2n}$.
The notation $\Lie G$ means the Lie algebra of 
a group $G$, we will also use the corresponding gothic letter
$\ga$.
The identity component of an algebraic group $G$ is denoted by 
$G^\circ$.
$\Mat_{m,n}$ denotes the space of complex $m\times n$-matrices,
and if $m=n$ we simply write $\Mat_n$.
The identity matrix in $\Mat_n$ is denoted by $I_n$.
The transpose of a matrix $A$ is denoted by $A^\top$.
The canonical basis vectors of $\CC^n$ are denoted by
$e_1,\ldots,e_n$.
%For an abstract vector space $V$, we also write $V^m$ to indicate
%that $\dim V=m$.

For any algebraic group $G$, let $\Zen(G)$, $L(G)$, and $\Ru(G)$
denote the center, a maximal connected reductive subgroup, and the unipotent radical of $G$, respectively.
Then $G$ is the semidirect product $G=L(G)\cdot\Ru(G)$.
Write $S(G)$ for a maximal connected semisimple subgroup
of $G$, the commutator subgroup of $L(G)$.
Note that $L(G)$ and $S(G)$ are unique up to conjugation.

%%%
\subsection*{Acknowledgements}
The authors would like to thank Vladimir Popov
and Alexander Elashvili
for helpful discussions and comments, and also the anonymous
referees for many helpful remarks and suggestions to improve
the article.
Dietrich Burde acknowledges support by the Austrian Science Foundation FWF, grant P28079 and grant I3248.
Andrei Minchenko acknowledges support by the Austrian Science Foundation FWF, grant P28079.
Wolfgang Globke acknowledges support by the Australian Research Council,
grant DE150101647.

%%%%%%%%%
\section{Preliminaries on prehomogeneous modules}\label{sec:preliminaries}

A module $(G,\rho,V)$, or $(G,V)$ for short, for an algebraic group
$G$ with a rational representation $\rho:G\to\GL(V)$ on a
finite-dimensional complex vector space $V$ is called a
\emph{prehomogeneous module} if $\rho(G)$ has a Zariski-open
orbit in $V$. In this case, $\dim G\geq\dim V$. More precisely,
if $x\in V$ is a \emph{point in general position}, that is, it
lies in the open orbit of $G$, and $G_x$ its stabilizer subgroup,
then
\[
\dim V = \dim G - \dim G_x.
\]
The stabilizer $H=G_x$ of any point $x$ in the open orbit is
called the \emph{generic stabilizer} of $(G,\rho,V)$.
%The complement $V_0=V\backslash\rho(G)x$ of the open orbit is
%called the \emph{singular set}.
A prehomogeneous module is \emph{\'etale} if $H^\circ=\{1\}$
(equivalently, if $\dim G=\dim V$).
An \'etale module $(G,V)$ is called \emph{super-\'etale} if
$H=\{1\}$.
%The theory of prehomogeneous modules was established
%by Sato and Kimura \cite{SK77}, and we will review some of their
%results below.

%Now assume $G$ is a reductive algebraic group.
%A prehomogeneous module is called \emph{regular} if the generic
%stabilizer $H$ is also a reductive group.
%Regular modules are characterized by the following theorem:
%
%\begin{thm}[Sato \& Kimura]\label{thm:regularPV}
%Let $G$ be a reductive algebraic group and $(G,\rho,V)$ a
%prehomogeneous module.
%Then the following are equivalent:
%\begin{enumerate}
%\item
%$(G,\rho,V)$ is a regular prehomogeneous module.
%\item
%The singular set $V_0$ is a hypersurface.
%\item
%The open orbit $\rho(G)x=V\backslash V_0$ is an affine
%variety.
%%\item
%%Each generic isotropy subgroup $G_x$ for $x\in V\backslash V_0$
%%is reductive.
%\item
%Each generic isotropy subalgebra $\ga_x$ for $x\in V\backslash V_0$
%is reductive in $\ga=\Lie(G)$.
%\end{enumerate}
%\end{thm}
%
%\begin{rmk}
%\'Etale modules are regular, as finite groups are reductive.
%\end{rmk}

See Burde and Globke \cite[Proposition 4.1]{BG16} for a proof of
the
following result which we will use frequently without further
reference:

\begin{prop}\label{prop:stabilizers}
The following conditions are equivalent:
\begin{enumerate}
\item[\rm (1)]
$(G,\rho_1\oplus\rho_2,V_1\oplus V_2)$ is an \'etale
module.
\item[\rm (2)]
$(G,\rho_1,V_1)$ is prehomogeneous and
$(H^\circ,\rho_2|_H,V_2)$
is an \'etale module, where $H^\circ$ denotes the connected
component of the generic stabilizer of $(G,\rho_1,V_1)$.
\end{enumerate}
Equivalence also holds if each ``\'etale'' is replaced by
``prehomogeneous''.
\end{prop}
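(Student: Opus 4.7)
The plan is to reduce both equivalences to a single dimension-counting argument about the stabilizer of a generic point $(v_1,v_2)\in V_1\oplus V_2$ under the diagonal $G$-action, and then distinguish the prehomogeneous case from the étale case by whether stabilizers are required to be finite.

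First I would establish the basic identity. For any $(v_1,v_2)\in V_1\oplus V_2$,
\[
G_{(v_1,v_2)} \;=\; G_{v_1}\cap G_{v_2} \;=\; (G_{v_1})_{v_2} \;=\; H_{v_2},
\]
where $H=G_{v_1}$. Writing $\dim G\cdot(v_1,v_2)=\dim G-\dim G_{(v_1,v_2)}$, and splitting as
\[
\dim G-\dim H_{v_2} \;=\; (\dim G-\dim H) \,+\, (\dim H-\dim H_{v_2}) \;=\; \dim G\cdot v_1 \,+\, \dim H\cdot v_2,
\]
one obtains the key formula $\dim G\cdot(v_1,v_2)=\dim G\cdot v_1+\dim H\cdot v_2$. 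Since $H^\circ$ has finite index in $H$, one has $\dim H\cdot v_2=\dim H^\circ\cdot v_2$, so this also equals $\dim G\cdot v_1+\dim H^\circ\cdot v_2$.

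Next I would handle the prehomogeneous equivalence. For (1)$\Rightarrow$(2), assume $G$ has a Zariski-open orbit in $V_1\oplus V_2$. The image of this orbit under the projection $\pi_1\colon V_1\oplus V_2\to V_1$ is a constructible, $G$-stable subset of the irreducible variety $V_1$ containing a dense open set, so it must contain a dense $G$-orbit, showing $(G,V_1)$ is prehomogeneous. Choosing $(v_1,v_2)$ in the intersection of the open $G$-orbit with $\pi_1^{-1}(v_1)$ for a point $v_1$ in general position, the displayed formula forces $\dim H\cdot v_2=\dim V_2$, so the $H$-orbit (and hence an $H^\circ$-orbit) is open in $V_2$, giving prehomogeneity of $(H^\circ,\rho_2|_{H^\circ},V_2)$. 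For (2)$\Rightarrow$(1) one runs the same calculation in reverse: if $v_1$ lies in an open $G$-orbit and $v_2$ lies in an open $H^\circ$-orbit, the formula yields $\dim G\cdot(v_1,v_2)=\dim V_1+\dim V_2$, so $G\cdot(v_1,v_2)$ is open.

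Finally, I would upgrade to the étale statement by adding the condition $\dim G=\dim(V_1\oplus V_2)$ to both sides. Under (1), étaleness of $(G,V_1\oplus V_2)$ is equivalent to prehomogeneity plus the dimension equality, and combining with the decomposition $\dim G=\dim G\cdot v_1+\dim H$ shows $\dim H=\dim V_2$; together with the prehomogeneity of $(H^\circ,V_2)$ already obtained, this is precisely étaleness of $(H^\circ,\rho_2|_{H^\circ},V_2)$. The converse is the same computation read backwards.

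The main obstacle is being careful in the (1)$\Rightarrow$(2) direction that the projection argument really produces prehomogeneity of $(G,V_1)$ (rather than only a dense constructible image), and that throughout one may freely replace $H$ by $H^\circ$ when discussing openness of orbits, since $H^\circ\cdot v_2$ and $H\cdot v_2$ have the same dimension and $H\cdot v_2$ decomposes into finitely many $H^\circ$-translates of $H^\circ\cdot v_2$. Everything else is routine dimension bookkeeping.
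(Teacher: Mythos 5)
The paper does not actually give a proof of Proposition \ref{prop:stabilizers} --- it refers to Burde--Globke \cite{BG16}, Proposition 4.1 --- so there is nothing in the source to compare against line by line. That said, your argument is correct and is the standard one: the identity $G_{(v_1,v_2)}=(G_{v_1})_{v_2}$ plus orbit--stabilizer dimension bookkeeping, with the observation that replacing $H$ by $H^\circ$ does not change orbit dimensions.

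One small imprecision worth fixing: you write that a constructible, $G$-stable subset of $V_1$ containing a dense open set ``must contain a dense $G$-orbit.'' That is not true in general (take $G$ acting trivially). What saves you is stronger: $\pi_1$ is $G$-equivariant, so $\pi_1(\Omega)$ is itself a single $G$-orbit, namely $G\cdot v_1$; since it is the continuous image of a dense set it is dense, and a dense orbit (being locally closed) is automatically open. Alternatively, you can avoid the topological detour entirely: your key formula gives
\[
\dim V_1+\dim V_2=\dim G\cdot(v_1,v_2)=\dim G\cdot v_1+\dim H\cdot v_2\le\dim V_1+\dim V_2,
\]
which forces $\dim G\cdot v_1=\dim V_1$ and $\dim H\cdot v_2=\dim V_2$ simultaneously, and orbits of full dimension in an irreducible variety are open. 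This makes the (1)$\Rightarrow$(2) direction a one-line consequence of the dimension identity and matches the cleanliness of your (2)$\Rightarrow$(1) step. Everything else --- in particular the upgrade to the \'etale statement via $\dim H=\dim G-\dim V_1=\dim V_2$ --- is correct.
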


Two modules $(G_1,\rho_1,V_1)$ and
$(G_2,\rho_2,V_2)$ are called \emph{equivalent} if there exists
an isomorphism of algebraic groups $\psi:\rho_1(G_1)\to\rho_2(G_2)$
and a linear isomorphism $\phi:V_1\to V_2$ such that
$\psi(\rho_1(g))\phi(x)=\phi(\rho_1(g)x)$ for all $x\in V_1$
and $g\in G_1$.

%\begin{rmk}\label{rem:dual}
%If $G$ is reductive, then the dual representation
%$\rho^*:G\to\GL(V^*)$ of any given representation $\rho:G\to\GL(V)$
%is equivalent to $\rho$ (cf.~Mostow \cite{Mostow55}).
%\end{rmk}

Let $m>n\geq 1$ and $\rho:G\to\GL(V)$ be an $m$-dimensional
rational representation of an algebraic group $G$, and let
$\rho^*$ be the dual representation for $\rho$.
Then we say that the modules
\[
\bigl( G\times \GL_n,\ \rho\otimes\omega_1,\ V \otimes \CC^n \bigr)
\quad \mathrm{and}\quad
\bigl( G\times \GL_{m-n},\ \rho^*\otimes\omega_1,\ V^{*} \otimes \CC^{m-n} \bigr)
\]
are \emph{castling transforms}\index{castling transform} of each
other.
More generally, we say two modules $(G_1,\rho_1,V_1)$ and
$(G_2,\rho_2,V_2)$ are \emph{castling-equivalent}
if $(G_1,\rho_1,V_1)$ is equivalent to a module
obtained after a finite number of castling transforms from
$(G_2,\rho_2,V_2)$.
A module $(G,\rho,V)$ is called \emph{reduced} (or \emph{castling-reduced}) if $\dim V\leq \dim V'$ for every castling transform
$(G,\rho',V')$ of $(G,\rho,V)$.
Sato and Kimura \cite[\S 2]{SK77} proved that prehomogeneity and generic
stabilizers are preserved by castling transforms.

\section{\'Etale modules for groups with factor $\Sp_n$ or $\SO_n$}\label{sec:SpSO}

In this section we will construct two families of \'etale modules
for reductive algebraic groups $G$.
In the first family, $G$ contains a simple factor $\Sp_n$,
$n\geq 1$, and theses modules are even super-\'etale, thus proving
that groups with super-\'etale modules are not restricted to
products of special linear groups.
In the second family, $G$ contains a factor $\SO_n$, $n\geq 2$.
This proves that groups with \'etale modules (but possibly
non-trivial stabilizer) do not have to be special in the sense
of Serre.
Moreover, these are the first known examples of \'etale modules
for reductive algebraic groups that contain factors $\Sp_n$ or
$\SO_n$ for arbitrary $n>2$.

We need some preparations.
Suppose $G$ is an algebraic group of the form
\[
G=G_m\times G_{m-1}\times\cdots\times G_1,
\]
where $G_k\subseteq\GL_k$.
The vector space
\begin{equation}
E_m = \Mat_{m,m-1}\oplus\Mat_{m-1,m-2}\oplus\ldots\oplus\Mat_{2,1}
\label{eq:Em}
\end{equation}
becomes a $G$-module for the action defined as follows:
An element $A=(A_m,\ldots,A_1)\in G$ acts on
$X=(X_{m-1},\ldots,X_1)\in E_m$ by
\begin{equation}
A.X
=
(A_m X_{m-1} A_{m-1}^\top,\
A_{m-1} X_{m-2} A_{m-2}^\top,\
\ldots,\
A_2 X_1 A_1^\top).
\label{eq:action}
\end{equation}
%For any given $k\in\{1,\ldots,m\}$,
%an element $(A_{k+1},A_k)\in G_{k+1}\times G_k$ acts on
%$X\in\Mat_{k+1,k}$ by
%\begin{equation}
%(A_{k+1},A_k).X = A_{k+1} X A_k^\top.
%\label{eq:action}
%\end{equation}
%The remaining factors $G_j$ with $j\neq k,k+1$ act trivially on
%$\Mat_{k+1,k}$.
Note that
\begin{equation}
\dim E_m
=
\sum_{k=1}^{m-1} (k+1)k
=
\frac{m(m-1)}{2}+\sum_{k=1}^{m-1} k^2.
\label{eq:dimE}
\end{equation}

%%%
\subsection[Super-\'etale modules for groups with factor $\Sp_n$]{Super-\'etale modules for groups with factor $\boldsymbol{\Sp_n}$}\label{sec:Spn}

We wish to construct a family of super-\'etale
modules for the group
\[
G = \Sp_n \times \GL_{2n-1}\times\cdots\times\GL_1.
\]
We define a symplectic form $\omega$
in terms of the canonical basis of $\CC^{2n}$ by
\begin{align*}
\omega(e_{2j-1},e_{2j})&=1 \quad\text{ for }j=1,\ldots,n, \\
\omega(e_{2j-1},e_k)&=0=\omega(e_{2j},e_k) \quad\text{ for }k\neq 2j,2j-1.
\end{align*}
Define subspaces $F_k=\mathrm{span}\{e_1,\ldots,e_k\}$
of $\CC^{2n}$ for $k=1,\ldots,2n$.
%Let $\CC^{2n}$ for $n\geq 1$ be equipped with a symplectic form
%$\omega$ and consider a maximal flag
%\begin{equation}
%\{0\}\subset F_1 \subset F_2 \subset \ldots \subset F_{2n}=\CC^{2n}
%\label{eq:flag}
%\end{equation}
%with $F_k\cong\CC^k$, such that for $k=1,\ldots,2n-1$,
%\[
%F_{k+1} = F_k\oplus\CC e_{k+1},
%\]
%and for $j=1,\ldots,n$, the span of $e_{2j-1},e_{2j}$ is
%orthogonal to $F_{2j-2}$ and non-degenerate with respect to
%$\omega$, and $\omega(e_{2j-1},e_{2j})=1$.
%%In other words, the $e_1,\ldots,e_{2n}$ form a symplectic basis for $\CC^{2n}$.

Let $\Sp_n\subset\GL_{2n}$ denote the symplectic group that
preserves the symplectic form $\omega$.
Then for every $A\in\Sp_n$ and $k=1,\ldots,2n$, we have
$A e_{k}^\perp \perp A e_k$.

We can identify $F_{k+1}\otimes F_k$ with
$\CC^{k+1}\otimes\CC^k\cong\Mat_{k+1,k}$.
With $E_{2n}$ from \eqref{eq:Em}, introduce the $G$-module
\[
V = \CC^{2n}\oplus E_{2n}.
\]
where $G$ acts on $\CC^{2n}$ by the standard action of $\Sp_n$
and $G$ acts on $E_{2n}$ by \eqref{eq:action},
for $G_{2n}=\Sp_n$, $G_{2n-1}=\GL_{2n-1}$,\ldots, $G_1=\GL_1$.

We have
\begin{equation}
\begin{split}
\dim G
&=2n^2+n + \sum_{k=1}^{2n-1}k^2
=2n+\frac{2n(2n-1)}{2} + \sum_{k=1}^{2n-1}k^2\\
&=2n+\sum_{k=1}^{2n-1}k + \sum_{k=1}^{2n-1}k^2
=2n+\dim E_{2n}
=\dim V.
\end{split}
\label{eq:dimGdimV_Spn}
\end{equation}

We will prove by induction on $n$ that $V$ is super-\'etale for
$G$. We only need to show that the generic stabilizer of the
$G$-action is trivial, then it follows from \eqref{eq:dimGdimV_Spn}
that $G$ has an open orbit.

In the case $n=1$, $G\cong\SL_2\times\GL_1$
and $V=\Mat_2$, where $\SL_2$ acts by
matrix multiplication and $\GL_1$ by scalar multiplication of the
second column of a $2\times 2$-matrix.
One verifies directly that this is a super-\'etale module, and so
this confirms the initial case for the induction:

\begin{lemma}\label{lem:induction_start_Spn}
For $n=1$, the given action of $G=\Sp_1\times\GL_1$ on
$V=\CC^2\oplus\CC^2$ is \'etale and has trivial stabilizer
at the point $(e_1,e_2)\in V$.
\end{lemma}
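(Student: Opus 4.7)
The plan is to unravel the abstract definition of the action in the special case $n=1$ and verify the stabilizer condition by a direct linear algebra computation; étaleness then follows from the dimension count already established in \eqref{eq:dimGdimV_Spn}.

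First I would identify the action explicitly. Since $\Sp_1=\SL_2$ and $E_2=\Mat_{2,1}\cong\CC^2$, formula \eqref{eq:action} specializes to
\[
(A,\lambda).(v_1,v_2) = (Av_1,\ \lambda Av_2)
\]
for $(A,\lambda)\in\SL_2\times\GL_1$ and $(v_1,v_2)\in\CC^2\oplus\CC^2$, where the first factor is the standard $\Sp_1$-representation and the second factor is $A_2 X_1 A_1^\top$ with $A_1=\lambda\in\GL_1$ acting as a scalar. Grouping $(v_1,v_2)$ as the columns of a $2\times 2$-matrix, this is exactly the description in the lemma statement: $\SL_2$ acts by left multiplication and $\GL_1$ scales the second column.

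Next I would compute the stabilizer of the base point $(e_1,e_2)$. The equation $Ae_1=e_1$ with $A\in\SL_2$ forces $A=\bigl(\begin{smallmatrix}1&b\\0&1\end{smallmatrix}\bigr)$ for some $b\in\CC$, and the equation $\lambda A e_2=e_2$ then gives $\lambda b=0$ and $\lambda=1$, so $b=0$ and $A=I_2$. Hence the stabilizer at $(e_1,e_2)$ is trivial.

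Finally, combining the trivial stabilizer with the dimension identity $\dim G=\dim V=4$ from \eqref{eq:dimGdimV_Spn}, the orbit $G.(e_1,e_2)$ has dimension equal to $\dim V$ and is therefore Zariski open; in particular $(G,V)$ is super-\'etale and $(e_1,e_2)$ is a point in general position. There is no real obstacle here — the only thing to be careful about is the interpretation of the $A_1^\top$ action when $A_1=\lambda$ is a scalar, and the conventional identification $\Sp_1=\SL_2$.
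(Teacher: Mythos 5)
Your proposal is correct and essentially coincides with what the paper intends: the paper dismisses the $n=1$ base case with "one verifies directly," and your computation is precisely that direct verification. The identification of the action as $(A,\lambda).(v_1,v_2)=(Av_1,\lambda Av_2)$ matches the paper's description ($\SL_2$ by left multiplication, $\GL_1$ scaling the second column of $\Mat_2$), the stabilizer computation at $(e_1,e_2)$ is correct, and the dimension count $\dim G=4=\dim V$ then gives the open orbit.
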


For the induction step,
consider the action of $\Sp_n\times\GL_{2n-1}$
on $\CC^{2n}\oplus(F_{2n}\otimes F_{2n-1})$ first.
We can identify this space with $\Mat_{2n}$,
the action of $(A,B)\in\Sp_n\times\GL_{2n-1}$ given by
\[
(A,B).X =
A X \left(\begin{smallmatrix}
B^\top & 0 \\
0&  1
\end{smallmatrix}\right),
\quad
X\in\Mat_{2n,2n}.
\]
As a point in general position, choose the identity matrix
$X_0=I_n$. Then, if
\[
A I_n \left(\begin{smallmatrix}
B^\top & 0 \\
0&  1
\end{smallmatrix}\right)
=I_n,
\]
it follows that
\[
A = \begin{pmatrix}
A_1 & 0 \\
0 &  1
\end{pmatrix}\in\Sp_n
\]
with $A_1=(B^\top)^{-1}\in\GL_{2n-1}$.
Recall that $A e_{2n}=e_{2n}$ implies $A e_{2n}^\perp=e_{2n}^\perp$,
and the form of the matrix $A$ thus requires $A F_{2n-2}=F_{2n-2}$.
Also, $A e_{2n-1}=e_{2n-1}$ since $A$ also preserves $F_{2n-2}^\perp$.
Hence
\[
A = \begin{pmatrix}
A_0 & 0 \\
0&  I_2
\end{pmatrix}\in\Sp_n,
\quad A_0\in\Sp_{n-1}.
\]
This proves:

\begin{lemma}\label{lem:SpGL}
The stabilizer $H$ of the $\Sp_n\times\GL_{2n-1}$-action
on $\CC^{2n}\oplus(\CC^{2n}\otimes\CC^{2n-1})$ at the point $X_0$
is given by
\[
H = \Bigl\{\Bigl(
\begin{pmatrix}
A_0 & 0 \\
0&  I_2
\end{pmatrix},
\begin{pmatrix}
A_0^{-1} & 0 \\
0&  1
\end{pmatrix}
\Bigr)
\ \Bigl|\
A_0\in\Sp_{n-1}
\Bigr\}
\cong\Sp_{n-1}.
\]
Hence the stabilizer $H_{2n-1}$ of the $G$-action on the submodule
$\CC^{2n}\oplus(\CC^{2n}\otimes\CC^{2n-1})$ of $V$ at the point
$X_0$ is
\[
H_{2n-1}
=
\Sp_{n-1}\times \GL_{2n-2}\times\cdots\times\GL_1,
\]
with the embedding of $\Sp_{n-1}$ in $G$ given as above.
\end{lemma}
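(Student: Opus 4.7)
The plan is to unpack the stabilizer equation in the matrix model and then leverage the symplectic condition on $A$ to progressively cut it down to a block-diagonal form. Identifying $\CC^{2n}\oplus(F_{2n}\otimes F_{2n-1})$ with $\Mat_{2n}$ by placing the $\CC^{2n}$ summand into the last column, the action of $(A,B)\in\Sp_n\times\GL_{2n-1}$ becomes the displayed right/left multiplication, and the identity matrix $X_0=I_{2n}$ is a natural candidate for a point in general position. The equation
\[
A X_0 \begin{pmatrix} B^\top & 0\\ 0 & 1\end{pmatrix}=X_0
\]
immediately yields $A=\begin{pmatrix} A_1 & 0\\ 0 & 1\end{pmatrix}$ with $A_1=(B^\top)^{-1}\in\GL_{2n-1}$; in particular $A e_{2n}=e_{2n}$ and $B$ is entirely determined by $A$, so it suffices to compute which $A\in\Sp_n$ of this block shape are admissible.

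The first substantive step is to refine the block structure of $A$ using $A\in\Sp_n$. Because $Ae_{2n}=e_{2n}$, $A$ preserves $e_{2n}^\perp=F_{2n-2}\oplus\CC e_{2n}$; combined with $AF_{2n-1}=F_{2n-1}$, which is forced by the visible block form, this gives $AF_{2n-2}=F_{2n-2}$. Then $A$ also preserves $F_{2n-2}^\perp=\mathrm{span}\{e_{2n-1},e_{2n}\}$, so $Ae_{2n-1}\in F_{2n-1}\cap\mathrm{span}\{e_{2n-1},e_{2n}\}=\CC e_{2n-1}$, and the symplectic normalization $\omega(Ae_{2n-1},Ae_{2n})=\omega(e_{2n-1},e_{2n})=1$ pins the coefficient to $1$. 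Hence $A$ has the claimed form $\begin{pmatrix} A_0 & 0\\ 0 & I_2\end{pmatrix}$, with $A_0$ acting on $F_{2n-2}$. Since the restriction of $\omega$ to $F_{2n-2}$ is the standard symplectic form on $\CC^{2n-2}$, one has $A_0\in\Sp_{n-1}$; conversely every such $A_0$ manifestly yields a stabilizing pair, giving $H\cong\Sp_{n-1}$.

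The only delicate part is the chain $Ae_{2n}=e_{2n}\Rightarrow AF_{2n-2}=F_{2n-2}\Rightarrow Ae_{2n-1}=e_{2n-1}$: both implications rely on the identity $(Ae_k)^\perp=Ae_k^\perp$ for $A\in\Sp_n$ recalled just before the lemma, applied first with $k=2n$ and then via $F_{2n-2}^\perp$, together with the block shape already obtained from the stabilizer equation. Once these are in place, no further symplectic computation is needed.

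For the statement about $H_{2n-1}$, observe that on the submodule $\CC^{2n}\oplus(\CC^{2n}\otimes\CC^{2n-1})$ of $V$ only the factors $\Sp_n$ and $\GL_{2n-1}$ act non-trivially; by \eqref{eq:action} the remaining factors $\GL_{2n-2},\ldots,\GL_1$ act solely on the other summands of $E_{2n}$ and therefore fix every point of this submodule. Consequently $H_{2n-1}$ is the direct product of the $\Sp_{n-1}$-image $H$ computed above with $\GL_{2n-2}\times\cdots\times\GL_1$, with the embedding of $\Sp_{n-1}$ in $G$ as in the displayed formula.
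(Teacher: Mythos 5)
Your proof is correct and follows essentially the same route as the paper: use the stabilizer equation in the $\Mat_{2n}$ model to force the block form of $A$, then use $A\in\Sp_n$ (via $Ae_{2n}^\perp = e_{2n}^\perp$ and $F_{2n-2}^\perp$) to cut $A$ down to $\begin{pmatrix}A_0&0\\0&I_2\end{pmatrix}$ with $A_0\in\Sp_{n-1}$. You are a bit more explicit than the paper in the final step $Ae_{2n-1}=e_{2n-1}$, where you invoke the normalization $\omega(Ae_{2n-1},Ae_{2n})=1$ to pin the scalar; the paper compresses this into ``since $A$ also preserves $F_{2n-2}^\perp$,'' but the substance is the same.
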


%Observe that
%\[
%H_{2n-1}=\Sp_{n-1}\times \GL_{2n-2}\times\cdots\times \GL_1,
%\]
%is of the same type as the original group $G$, but with $n$
%replaced by $n-1$.
%However, if we write
%\[
%V=\CC^{2n}\oplus(F_{2n}\otimes F_{2n-1})\oplus E_{2n-1}
%\]
%the $H_{2n-1}$-action on $E_{2n-1}$ is not of the same type as
%the $G$-action on $V$, as there is no additional summand
%$\CC^{2n-2}$ for the $\Sp_{n-1}$ action.
%So we cannot apply the induction hypothesis to
%$(H_{2n-1},E_{2n-1})$.
Consider the first summand $W$ in $E_{2n-1}$,
\[
W=F_{2n-1}\otimes F_{2n-2}=\Mat_{2n-1,2n-2}
\]
where the $H_{2n-1}$-action is given by the action of the factor
$\Sp_{n-1}\times\GL_{2n-2}$.
Here, $\Sp_{n-1}$ is identified with
the projection of the stabilizer of $\Sp_n\times\GL_{2n-1}$ to
$\GL_{2n-1}$ (see Lemma \ref{lem:SpGL}), and
this projection acts on the subspace $F_{2n-2}\subset F_{2n-1}$
and trivially on its complement in $F_{2n-1}$.
Thus we can rewrite the module $W$ as
\begin{align*}
W&=(F_{2n-2}\oplus\CC e_{2n-1})\otimes F_{2n-2}=W_1\oplus W_2, \\
W_1 &=F_{2n-2}\otimes F_{2n-2}\cong\Mat_{2n-2},\\
W_2 &=\CC\otimes F_{2n-2}=F_{2n-2}\cong\CC^{2n-2},
\end{align*}
where $(A,B)\in\Sp_{n-1}\times\GL_{2n-2}$ acts on $X\in W_1$ by
$X\mapsto A X B^\top$ and on $y\in W_2$ by $y\mapsto By$.

Choose $X_1=I_{2n-2}$ as a point in general position for the
action on $W_1$.
The stabilizer of this action is again a diagonally embedded copy of
$\Sp_{n-1}$ in $\Sp_{n-1}\times\GL_{2n-2}$.
Identifying this copy once again with its projection to $\GL_{n-2}$,
we have an $\Sp_{n-1}$-action on $W_2\cong\CC^{n-2}$ by
left multiplication.

\begin{lemma}\label{lem:H2n-2}
The stabilizer of the $H_{2n-1}$-action at the point $X_1=I_{2n-2}$
in the module $W_1$ is the group
\[
H_{2n-2} = \Sp_{n-1}\times \GL_{2n-3}\times\cdots\times \GL_1,
\]
where the $\Sp_{n-1}$-action on $W_2=\CC^{2n-2}$ is by
left multiplication.
\end{lemma}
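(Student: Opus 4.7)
The plan is to exploit the fact that, among the factors of $H_{2n-1}=\Sp_{n-1}\times\GL_{2n-2}\times\cdots\times\GL_1$, only $\Sp_{n-1}\times\GL_{2n-2}$ acts non-trivially on $W_1=\Mat_{2n-2}$. The factors $\GL_{2n-3},\ldots,\GL_1$ act on later summands of $E_{2n-1}$ and do not enter the formula for the action on $W_1$, so they all lie in the stabilizer of $X_1$ automatically.

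For the $\Sp_{n-1}\times\GL_{2n-2}$-part, the action is $(A,B).X=AXB^\top$, so the stabilizer equation at $X_1=I_{2n-2}$ reads $AB^\top=I_{2n-2}$, equivalently $B=(A^{-1})^\top$. Since $A\in\Sp_{n-1}$ is invertible, $B$ is uniquely determined by $A$, and conversely every $A\in\Sp_{n-1}$ gives a pair in the stabilizer. Thus the stabilizer is the diagonally embedded subgroup
\[
\Bigl\{\bigl(A,(A^{-1})^\top\bigr)\ \big|\ A\in\Sp_{n-1}\Bigr\}\cong\Sp_{n-1},
\]
and combining with the trivial action of the remaining $\GL_k$'s yields
\[
H_{2n-2}=\Sp_{n-1}\times\GL_{2n-3}\times\cdots\times\GL_1,
\]
as claimed for the group structure.

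It remains to identify the induced action on $W_2=\CC\otimes F_{2n-2}\cong\CC^{2n-2}$. On $W_2$ the factor $\GL_{2n-2}$ acts by left multiplication $y\mapsto By$, so the stabilizer acts through its second component: $y\mapsto(A^{-1})^\top y$ for $A\in\Sp_{n-1}$. The key observation is that the standard representation of $\Sp_{n-1}$ is self-dual (it preserves a symplectic form), so $A\mapsto(A^{-1})^\top$ is equivalent to the standard representation $A\mapsto A$ on $\CC^{2n-2}=F_{2n-2}$. Hence, after identifying $\Sp_{n-1}$ with its projection to $\GL_{2n-2}$ as in the statement, the induced action on $W_2$ is the standard action by left multiplication.

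The steps are all essentially routine linear algebra; the only point that requires a little care is the last one, namely checking that passing from $A$ to $(A^{-1})^\top$ does not change the isomorphism type of the $\Sp_{n-1}$-representation on $W_2$, which is exactly where self-duality of the symplectic standard representation is used. Everything else is a direct computation with the defining formulas, so no genuine obstacle is anticipated.
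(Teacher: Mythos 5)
Your proof is correct and follows essentially the same route as the paper: compute the stabilizer of $X_1$ in $W_1$ directly as the diagonal copy $\{(A,(A^{-1})^\top)\}$, then identify it with its projection to $\GL_{2n-2}$ to read off the left-multiplication action on $W_2$. The one place where you add something the paper leaves implicit is the self-duality observation justifying that $A\mapsto(A^{-1})^\top$ lands back in $\Sp_{n-1}$ (equivalently, is an automorphism of $\Sp_{n-1}$), which is a small but genuine clarification of the step ``identifying this copy once again with its projection.''
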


In order for $E_{2n-1}=W\oplus E_{2n-2}$ to be \'etale for the $H_{2n-1}$-action
(and thus the original module $V$ to be \'etale for the
$G$-action),
the stabilizer $H_{2n-2}$ must have an \'etale action on
\[
W_2 \oplus E_{2n-2}
=\CC^{2n-2} \oplus E_{2n-2}.
\]
Observe now that $\CC^{2n-2} \oplus E_{2n-2}$ is of the same form as the original
module $V$, and $H_{2n-2}$ is of the same form as the original
group $G$, with $n$ replaced by $n-2$.
Now we can apply the induction hypo\-thesis to conclude that
the $H_{2n-2}$-action on $V_{2n-2}$ and thus the $G$-action on
$V$ is super-\'etale (where we assume that all points in general
position are chosen similarly to $X_0$, $X_1$ above).

\begin{thm}\label{thm:superetale_Spn}
The module $(\Sp_n \times \GL_{2n-1}\times\cdots\times\GL_1,\CC^{2n}\oplus E_{2n})$ with the action given above
is a super-\'etale module.
\end{thm}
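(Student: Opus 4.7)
The plan is to prove the theorem by induction on $n$, with the base case $n=1$ handled by Lemma~\ref{lem:induction_start_Spn}. Since the equality $\dim G = \dim V$ was already verified in \eqref{eq:dimGdimV_Spn}, it suffices to exhibit a point $v \in V$ whose $G$-stabilizer is trivial: the orbit $G\cdot v$ will then have dimension $\dim G = \dim V$ and must be Zariski-open, so the module is super-\'etale. For the inductive step I would split off summands according to the decomposition
\begin{equation*}
V = \CC^{2n} \oplus (F_{2n}\otimes F_{2n-1}) \oplus (F_{2n-1}\otimes F_{2n-2}) \oplus E_{2n-2},
\end{equation*}
peeling off two summands at a time and using Proposition~\ref{prop:stabilizers} to reduce triviality of the stabilizer on $V$ to the same statement for a smaller module over a smaller group of the same shape.

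Concretely, apply Lemma~\ref{lem:SpGL} at the point $X_0 = I_{2n}$ of $\CC^{2n}\oplus(F_{2n}\otimes F_{2n-1})$: the action of the first two factors of $G$ is prehomogeneous there, with stabilizer $H_{2n-1} = \Sp_{n-1}\times\GL_{2n-2}\times\cdots\times\GL_1$, and $\Sp_{n-1}$ sits in $\Sp_n\times\GL_{2n-1}$ via the diagonal embedding $A_0 \mapsto (\mathrm{diag}(A_0,I_2),\mathrm{diag}(A_0^{-1},1))$. Next split $W = F_{2n-1}\otimes F_{2n-2}$ as $W_1\oplus W_2$ with $W_1 \cong \Mat_{2n-2}$ and $W_2 \cong \CC^{2n-2}$ as in the paragraphs above, and apply Lemma~\ref{lem:H2n-2} at the point $X_1 = I_{2n-2}$ of $W_1$. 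This produces a stabilizer $H_{2n-2} = \Sp_{n-1}\times\GL_{2n-3}\times\cdots\times\GL_1$ whose induced action on $W_2\oplus E_{2n-2} \cong \CC^{2n-2}\oplus E_{2n-2}$ is precisely the setup of the theorem with $n$ replaced by $n-1$.

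The induction hypothesis then supplies a point of $W_2\oplus E_{2n-2}$ with trivial $H_{2n-2}$-stabilizer, and two applications of Proposition~\ref{prop:stabilizers} lift triviality back through the filtration to yield a trivial $G$-stabilizer at the assembled point in $V$. The main verification required along the way is that the copy of $\Sp_{n-1}$ inside $H_{2n-2}$ really acts on $W_2 \cong \CC^{2(n-1)}$ through its defining symplectic representation, so that the pair $(H_{2n-2}, W_2\oplus E_{2n-2})$ genuinely matches the hypothesis of the inductive claim. This holds because the embedding $\Sp_{n-1} \hookrightarrow \Sp_n$, $A_0\mapsto\mathrm{diag}(A_0,I_2)$, preserves $F_{2n-2}$, and the restriction of $\omega$ to $F_{2n-2}$ is the standard symplectic form on $\CC^{2(n-1)}$.

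The hard part is exactly this self-similarity of the construction under the inductive step: after peeling off two summands, both the residual group and the residual module must be of the same type as the starting data, not some twisted variant. The symplectic structure of the $\Sp_n$-factor is what makes this work, through the isomorphism $\mathrm{Stab}_{\Sp_n}(e_{2n}) \cap \mathrm{Stab}_{\Sp_n}(e_{2n-1}) \cong \Sp_{n-1}$, and the choice of identity matrices as points in general position for the matrix summands is engineered so that the induced action on $W_2$ lands in the standard $\Sp_{n-1}$-representation on $\CC^{2(n-1)}$ rather than some other representation. Lemmas~\ref{lem:SpGL} and~\ref{lem:H2n-2} already package the needed case analysis cleanly, so the theorem reduces to assembling these pieces and invoking induction.
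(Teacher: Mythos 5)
Your proof is correct and follows essentially the same inductive scheme as the paper's own argument: the same base case (Lemma~\ref{lem:induction_start_Spn}), the same peeling off of $\CC^{2n}\oplus(F_{2n}\otimes F_{2n-1})\cong\Mat_{2n}$ via Lemma~\ref{lem:SpGL}, the same split $W=W_1\oplus W_2$ via Lemma~\ref{lem:H2n-2}, and the same reduction to the statement with $n$ replaced by $n-1$ (note that you correctly write $n-1$ and $X_0=I_{2n}$ where the paper has the slips ``$n-2$'' and ``$I_n$''). One small caution: Proposition~\ref{prop:stabilizers} as stated only controls the identity component of the stabilizer, so to conclude \emph{super}-\'etaleness you need (as you implicitly do) the exact stabilizers from Lemmas~\ref{lem:SpGL} and~\ref{lem:H2n-2}, which happen to be connected, rather than Proposition~\ref{prop:stabilizers} alone.
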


\begin{rmk}
For $n=2$, $(G,\rho,V)$ in Theorem \ref{thm:superetale_Spn}
can be viewed as a variation of an example given by Helmstetter
\cite[p.~1090]{Helm71}, which is the module
\[
G = \Sp_2\times\GL_3\times\GL_2\times\GL_1\times\GL_1,
\quad
V = \CC^4 \oplus (\CC^4\otimes\CC^3) \oplus (\CC^3\otimes\CC^2) \oplus \CC^3
\]
where the last copy of $\CC^3$ is identified with the space of
traceless $2\times 2$-matrices, and the action of $G$ is given by
\[
(A,B,C,\alpha,\beta).(x,Y,Z,U)
=
(\alpha Ax, AYB^\top, BZC^\top, \beta CUC^{-1}).
\]
This module is \'etale, but it is not super-\'etale, since the
action of $\GL_2$ on the last copy of $\CC^3$ has a non-connected
generic stabilizer.
\end{rmk}

\begin{rmk}
A second family of super-\'etale modules appears in the
construction of this section, namely the group
$\Sp_{n}\times\GL_{2n}\times\cdots\times\GL_1$ acting on the
module $E_{2n}$.
This group appears as the stabilizer in Lemma
\ref{lem:SpGL} (for $n-1$), where the module is the
module complement
of $\CC^{2n}\oplus(\CC^{2n}\otimes\CC^{2n-1})$ in this lemma.
\end{rmk}

%%%
\subsection[\'Etale modules for groups with factor $\SO_n$]{\'Etale modules for groups with factor $\boldsymbol{\SO_n}$}\label{sec:SOn}

We wish to construct a family of \'etale modules
for the group
\[
G = \SO_n \times \GL_{n-1}\times\cdots\times\GL_1,
\]
where we take $\SO_n$ to be the subgroup of $\SL_n$
preserving the bilinear
form represented by the identity matrix $I_n$.

%Let $\CC^{n}$, with $n\geq 1$, be equipped with a non-degenerate
%symmetric bilinear form $\langle\cdot,\cdot\rangle$, and let
%$\SO_n$ denote the group preserving $\langle\cdot,\cdot\rangle$.
%
%We may assume that there is a maximal flag
%\begin{equation}
%\{0\}\subset \CC^1 \subset \CC^2 \subset \ldots \subset \CC^{n}
%\label{eq:flag}
%\end{equation}
%where $\CC^k\subset\CC^{n}$ is identified with the span of the
%standard basis vectors $e_1,\ldots,e_k$,
%such that
%\[
%\CC^{k} = E_k \oplus \CC^{k-1},
%\quad
%k=1,\ldots,n
%\]
%such that $E_k$ is non-degenerate and orthogonal to $\CC^{k-1}$
%with respect to $\langle\cdot,\cdot\rangle$.
Let $n\geq 2$. Consider the $G$-module $V=E_n$ with the action given
by \eqref{eq:action}, where $G_n=\SO_n$, $G_{n-1}=\GL_{n-1}$,\ldots, $G_1=\GL_1$.
We have
\begin{equation}
\dim G = \frac{1}{2}n(n-1) + \sum_{k=1}^{n-1} k^2
=\sum_{k=1}^{n-1} k + \sum_{k=1}^{n-1} k^2
=\dim E_n.
\label{eq:dimGdimV_SOn}
\end{equation}
In order to verify that $V$ is an \'etale module for $G$,
we only need to show that the connected component $H^\circ$
of the generic stabilizer $H$ is trivial. Then it follows
from \eqref{eq:dimGdimV_SOn} that $G$ has an open orbit and the
action is \'etale.

%Since we are only looking for an \'etale module with finite but
%non-trivial stabilizer, we can simplify the task by doing the
%computations with Lie algebras.
%The infinitesimal version of the action \eqref{eq:action}
%is the $\ga_{k+1}\times\ga_k$-action
%\begin{equation}
%(A_{k+1},A_{k}).X \mapsto A_{k+1} X + X A_k^\top,
%\quad
%X\in\Mat_{k+1,k}.
%\label{eq:action2}
%\end{equation}

\begin{lemma}\label{lem:sogl}
The stabilizer $H_1$ of
$\SO_n\times\GL_{n-1}$ on the module $\Mat_{n,n-1}$ at the point
in general position $X_1=\left(\begin{smallmatrix}
I_{n-1}\\0\ldots0\end{smallmatrix}\right)$
is spanned by the elements
$(A,A_0)\in\SO_n\times\GL_{n-1}$ with
\[
A_0\in\OO_{n-1}
\quad
\text{ and }
\quad
A = \begin{pmatrix}
A_0 & {0} \\
{0} & \alpha
\end{pmatrix}\in\SO_n
\]
where $\alpha=\det(A_0)^{-1}$.
In particular,
\[
H_1\cong\OO_{n-1}.
\]
\end{lemma}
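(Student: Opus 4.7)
The plan is to proceed by a direct block-matrix calculation. First, I would write a general element $A\in\SO_n$ in the block form
\[
A=\begin{pmatrix} P & u \\ v^\top & \alpha\end{pmatrix},\qquad P\in\Mat_{n-1},\ u,v\in\CC^{n-1},\ \alpha\in\CC,
\]
and compute
\[
A X_1 A_0^\top = \begin{pmatrix} P \\ v^\top\end{pmatrix}A_0^\top = \begin{pmatrix} P A_0^\top \\ v^\top A_0^\top\end{pmatrix}.
\]
Since $A_0\in\GL_{n-1}$ is invertible, the stabilizer equation $AX_1 A_0^\top = X_1$ immediately forces $P=(A_0^\top)^{-1}$ and $v=0$.

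Next, I would impose the orthogonality relation $A^\top A=I_n$. With $v=0$, the three non-trivial block identities read $P^\top P=I_{n-1}$, $P^\top u=0$, and $u^\top u+\alpha^2=1$, giving respectively $P\in\OO_{n-1}$, then $u=0$ (since $P$ is invertible), and finally $\alpha^2=1$. Combining $P\in\OO_{n-1}$ with $P=(A_0^\top)^{-1}$ yields $A_0^\top=P^{-1}=P^\top$, so $A_0=P\in\OO_{n-1}$; in other words, the $\GL_{n-1}$-component $A_0$ coincides with the top-left block of $A$. The remaining condition $\det A=1$ then becomes $\alpha\det A_0=1$, giving $\alpha=(\det A_0)^{-1}$, which is consistent with $\alpha^2=1$ since $\det A_0=\pm1$.

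The projection $(A,A_0)\mapsto A_0$ from $H_1$ to $\OO_{n-1}$ is then a morphism of algebraic groups, surjective by the construction above and injective because $A_0$ determines $A$ via the derived formula, hence an isomorphism. That $X_1$ is indeed a point in general position can be verified a posteriori via orbit-stabilizer: the dimension count
\[
\dim(\SO_n\times\GL_{n-1})-\dim\OO_{n-1}=\tfrac{n(n-1)}{2}+(n-1)^2-\tfrac{(n-1)(n-2)}{2}=n(n-1)=\dim\Mat_{n,n-1}
\]
shows that the orbit through $X_1$ is Zariski-open. The main obstacle in the argument is simply the bookkeeping; the stabilizer equation and the orthogonality relation $A^\top A=I_n$ decouple cleanly, so no substantive difficulty arises.
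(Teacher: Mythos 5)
Your proof is correct and follows essentially the same route as the paper: impose $AX_1A_0^\top=X_1$ to force the bottom row of $A$ to vanish and the top-left block to invert $A_0^\top$, then let orthogonality of $A$ deliver $A_0\in\OO_{n-1}$, $u=0$, and $\alpha=\det(A_0)^{-1}$. The paper's proof is a terser version of this same block computation; your additional remarks (the explicit isomorphism $H_1\cong\OO_{n-1}$ via projection, and the dimension count verifying that $X_1$ is in general position) are correct and fill in details the paper leaves implicit.
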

%\begin{lemma}\label{lem:sogl}
%The stabilizer $\ha_1$ of $\so_n\times\gl_{n-1}$ on the
%module $\Mat_{n,n-1}$ at the point in general position
%$X_1=\left(\begin{smallmatrix}
%I_{n-1}\\0\ldots0\end{smallmatrix}\right)$
%is spanned by the elements
%$(A,-A_0^\top)\in\so_n\times\gl_{n-1}$ with
%\[
%A_0\in\so_{n-1}
%\quad
%\text{ and }
%\quad
%A = \begin{pmatrix}
%A_0 & \tilde{0} \\
%\tilde{0}^\top & 0
%\end{pmatrix}\in\so_n
%\]
%where $\tilde{0}$ means the zero vector in $\CC^{n-1}$.
%In particular,
%\[
%\ha_1\cong\so_{n-1}.
%\]
%\end{lemma}
\begin{proof}
Let $(A,B)\in\SO_n\times\GL_{n-1}$,
and let $A_0$ be the upper left $(n-1)\times(n-1)$-block of $A$
and $a_n$ the first $n-1$ entries in the last row of $A$.
Then $AX_1 B^\top=X_1$ is equivalent to
$A_0^{-1} = B^\top$, $a_n={0}$,
and as $A_0$ is orthogonal, this gives the required form of the
stabilizer $H_1$ of $X_1$.
\end{proof}

The identity component $H_1^\circ\cong\SO_{n-1}$ of the generic
stabilizer of $(G,V)$
%(or rather $(\ga,\rho',V)$)
acts on the next summand $\Mat_{n-1,n-2}$ in $E_n$
via its injective projection to the
$\GL_{n-1}$-factor.
But this is identical to the left multiplication of $\SO_{n-1}$
on $\Mat_{n-1,n-2}$.
So we are now looking at the action of
\[
\SO_{n-1}\times\GL_{n-1}\times\cdots\times\GL_1
\]
given by \eqref{eq:action} on $E_{n-1}$.
When choosing a point in general position for this action as in
Lemma \ref{lem:sogl}, we can apply induction on $n$ to conclude that
this module is \'etale. Moreover, Lemma \ref{lem:sogl} for $n=2$
takes care of the initial case,
that is,
%\begin{lemma}\label{lem:so2}
the action of the abelian group $\SO_2\times\GL_1$ on
$V=\CC^2$ given by $(A,\lambda)\mapsto \lambda A x$, $x\in\CC^2$,
is \'etale with generic stabilizer $H\cong\ZZ_2$.
%\end{lemma}
%\begin{proof}
%Let $(A,\lambda)\in\SO_2\times\GL_1$ and take the first canonical
%basis vector $e_1\in\CC^2$ as point in general position.
%If $(A,\lambda)$ is contained in the generic stabilizer, it
%satisfies the equation
%\[
%A e_1
%=\begin{pmatrix}
%a & -b\\
%b & a
%\end{pmatrix}
%\begin{pmatrix}
%1\\0
%\end{pmatrix}
%=
%\begin{pmatrix}
%a\\ b
%\end{pmatrix}
%=
%-\begin{pmatrix}
%\lambda\\0
%\end{pmatrix}
%=\lambda e_1,
%\]
%subject to the constraint $a^2+b^2=1$.
%This means $b=0$ and therefore
%\[
%\lambda=\frac{1}{a} = \pm 1.
%\]
%It follows that
%\[
%H
%=\Bigl\langle
%\Bigl(
%\begin{pmatrix}
%-1 & 0\\
%0 & -1
%\end{pmatrix},
%-1\Bigr)
%\Bigr\rangle
%\cong\ZZ_2
%\]
%is the generic stabilizer, which is finite, and hence the action
%is \'etale.
%\end{proof}

So we have shown:

\begin{thm}\label{thm:SOetale}
Let $n\geq 2$.
The module $(\SO_n \times \GL_{n-1}\times\cdots\times\GL_1, E_n)$
with the action given by \eqref{eq:action}
is an \'etale module.
\end{thm}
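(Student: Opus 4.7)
The plan is to derive the theorem by induction on $n\geq 2$, using the dimension identity \eqref{eq:dimGdimV_SOn} together with Proposition \ref{prop:stabilizers} and Lemma \ref{lem:sogl}. Since $\dim G=\dim E_n$, it suffices to exhibit a point whose stabilizer in $G$ has trivial identity component; the existence of an open orbit then follows from the orbit-stabilizer dimension formula.

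For the base case $n=2$ the module $E_2=\Mat_{2,1}\cong\CC^2$ carries the action of the abelian group $\SO_2\times\GL_1$ given by $(A,\lambda)\cdot x=\lambda Ax$. Lemma \ref{lem:sogl} at $n=2$ (or a direct verification at $x=e_1$) shows the stabilizer to be $\OO_1\cong\ZZ_2$, which is finite, so the action is \'etale.

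For the inductive step I would decompose
\[
E_n=\Mat_{n,n-1}\oplus E_{n-1}
\]
and apply Proposition \ref{prop:stabilizers}, which reduces \'etaleness of $(G,E_n)$ to two conditions: prehomogeneity of $(\SO_n\times\GL_{n-1},\Mat_{n,n-1})$, and \'etaleness of the induced action of the connected generic stabilizer $H^\circ$ on $E_{n-1}$. Both parts of the first condition are immediate from Lemma \ref{lem:sogl}, which identifies the generic stabilizer of the first summand as $H_1\cong\OO_{n-1}$; its identity component consists of those elements with $\det A_0=1$, forcing the scalar $\alpha=1$, and is therefore isomorphic to $\SO_{n-1}$. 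Combining with the factors $\GL_{n-2}\times\cdots\times\GL_1$, which stabilize the chosen point $X_1$ trivially because they do not act on $\Mat_{n,n-1}$, the full residual group is
\[
H^\circ\cong\SO_{n-1}\times\GL_{n-2}\times\cdots\times\GL_1.
\]
Its projection onto $\GL_{n-1}$ sends $A_0$ to the standard element $A_0\in\SO_{n-1}\subset\GL_{n-1}$, so its induced action on the leading summand $\Mat_{n-1,n-2}$ of $E_{n-1}$ is ordinary left multiplication by $\SO_{n-1}$; all remaining summands transform according to the pattern \eqref{eq:action}. This is precisely the setup of the theorem with $n$ replaced by $n-1$, and the inductive hypothesis finishes the argument.

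The only point requiring care is the bookkeeping in the reduction step: one must check that the projection $H^\circ\to\GL_{n-1}$ is injective with image exactly the standard $\SO_{n-1}$, so that the induced module on $E_{n-1}$ matches the hypothesis of the theorem for $n-1$ verbatim. Once this identification is confirmed, Proposition \ref{prop:stabilizers} assembles the \'etale module $(G,E_n)$ from its inductive parts without further difficulty.
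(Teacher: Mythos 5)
Your proof is correct and follows essentially the same route as the paper: an induction on $n$ pivoting on Lemma~\ref{lem:sogl}, identifying the residual group as $\SO_{n-1}\times\GL_{n-2}\times\cdots\times\GL_1$ acting on $E_{n-1}$ in the pattern of \eqref{eq:action}, with the base case $n=2$ handled by Lemma~\ref{lem:sogl}. The only cosmetic difference is that you invoke Proposition~\ref{prop:stabilizers} explicitly (the paper uses it tacitly), and you spell out the bookkeeping that the projection $H_1^\circ\to\GL_{n-1}$ lands on the standard $\SO_{n-1}$ and that the trailing $\GL$ factors stabilize $X_1$ vacuously; both are accurate and in fact clarify a minor slip in the paper's text, which writes $\GL_{n-1}$ where $\GL_{n-2}$ is meant.
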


\section{\'Etale Lie algebras over fields of characteristic $0$}
\label{sec:base_change}

Let $\kk$ be a field of characteristic $0$. Recall that a linear Lie algebra $\ga\subset\gl_n(\kk)$ is called \emph{algebraic} if there is a $\kk$-defined linear algebraic group $G\subset\GL_n$ such that $\ga=(\Lie G)(\kk)$.
The Lie algebra $\ga$ is called \emph{prehomogeneous} if there is a
point $o\in\kk^n$ such that the map $\beta:\ga\to\kk^n$, $X\mapsto Xo$ is a surjective homomorphism of vector spaces, and $\ga$ is
called \emph{\'etale} if $\beta$ is an isomorphism.
 
\begin{prop}\label{prop:hull}
Let $\ga\subset\gl_n(\kk)$ be a \pre\ Lie algebra with generic stabilizer $\ha$.
Then there is a \pre\ algebraic Lie algebra $\wt\ga\subset\gl_n(\kk)$ with generic stabilizer $\wt\ha$ such that
$[\ga,\ga]=[\wt\ga,\wt\ga]$ and $\wt\ha=\ha\cap[\ga,\ga]$.
\end{prop}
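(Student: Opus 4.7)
My plan is to construct $\wt\ga$ as an algebraic Lie subalgebra of the algebraic hull $\hat\ga\subseteq\gl_n(\kk)$ of $\ga$. The starting observation is Chevalley's classical theorem that in characteristic zero $[\hat\ga,\hat\ga]=[\ga,\ga]$, which guarantees that the derived algebra is preserved upon passing to the hull. Since $\ga\subseteq\hat\ga$ is prehomogeneous at some point $o\in\kk^n$, the hull $\hat\ga$ is prehomogeneous at $o$ as well, with generic stabilizer $\hat\ha\supseteq\ha$; a dimension comparison using the common orbit dimension $n$ gives $\hat\ga=\ga+\hat\ha$ and $\hat\ha\cap[\ga,\ga]=\ha\cap[\ga,\ga]$.

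To cut $\hat\ga$ down to the right size, I pass to the abelian algebraic Lie algebra $\hat\ga/[\ga,\ga]$ via the quotient map $\pi$. The image $\pi(\hat\ha)$ is an algebraic subalgebra, and every algebraic subalgebra of an abelian algebraic Lie algebra over a field of characteristic zero admits an algebraic complement $\mathfrak{c}$. Set $\wt\ga:=\pi^{-1}(\mathfrak{c})$; this is algebraic as the preimage of an algebraic subalgebra and contains $[\ga,\ga]$. Because the map $\mathfrak{c}\to\kk^n/[\ga,\ga]\cdot o$ induced by the action on $o$ is an isomorphism (its would-be kernel is exactly $\pi(\hat\ha)$, which meets $\mathfrak{c}$ trivially), one checks that $\wt\ga$ is prehomogeneous at $o$ and, by tracing through the stabilizer condition, that $\wt\ha=\ha\cap[\ga,\ga]$, as required.

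The main obstacle is the equality $[\wt\ga,\wt\ga]=[\ga,\ga]$. The containment $\subseteq$ is immediate because $\wt\ga/[\ga,\ga]=\mathfrak{c}$ is abelian. For the reverse, one must choose $\mathfrak{c}$ so that the adjoint action of $\wt\ga$ on the abelianization $[\ga,\ga]/[[\ga,\ga],[\ga,\ga]]$ reproduces the same image as that of the full $\hat\ga/[\ga,\ga]$. Since $\hat\ga=\wt\ga+\hat\ha$ and $[\hat\ga,\hat\ga]=[\ga,\ga]$, every element of $[\ga,\ga]$ arises as a sum of brackets $[x+h,x'+h']$ with $x,x'\in\wt\ga$ and $h,h'\in\hat\ha$; the delicate point is selecting $\mathfrak{c}$ so as to absorb the $\hat\ha$-contributions into $\wt\ga$, exploiting the freedom in choosing algebraic complements of $\pi(\hat\ha)$ in $\hat\ga/[\ga,\ga]$. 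This is the crux of the argument, and I would expect it to require a transversality condition between the chosen complement and the kernel of the ad-action on $[\ga,\ga]/[[\ga,\ga],[\ga,\ga]]$.
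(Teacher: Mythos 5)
Your construction is the same as the paper's: pass to the algebraic hull $\hat\ga=\ga^{\rm a}$, invoke Chevalley's result $[\ga^{\rm a},\ga^{\rm a}]=[\ga,\ga]$, let $\ha'$ be the stabilizer of $o$ in $\ga^{\rm a}$, project to the commutative algebraic quotient $\aa=\ga^{\rm a}/[\ga,\ga]$, choose an algebraic complement $\ha_1$ (your $\mathfrak c$) of $\pi(\ha')$, and set $\wt\ga=\pi^{-1}(\ha_1)$. Your verification of prehomogeneity via the induced isomorphism $\mathfrak c\to\kk^n/([\ga,\ga]\cdot o)$ is a correct variant of the paper's direct dimension count, and your identification $\wt\ha=\hat\ha\cap[\ga,\ga]=\ha\cap[\ga,\ga]$ agrees with the paper's.

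The one place where you and the paper diverge is exactly the point you flag as the crux: the inclusion $[\ga,\ga]\subseteq[\wt\ga,\wt\ga]$. The paper's proof passes over this in silence, taking an arbitrary algebraic complement $\ha_1$; your worry that this does not suffice is well founded. Indeed, if $\ga^{\rm a}$ is the Borel of $\gl_2$ acting on $\kk^2$ with $o=e_2$, then $[\ga^{\rm a},\ga^{\rm a}]$ is the strictly upper triangular line, $\aa$ is the diagonal torus, $\pi(\ha')$ is the first coordinate line, and the algebraic complement $\ha_1=\{(a,a)\}$ produces $\wt\ga=\{\left(\begin{smallmatrix}a & y\\ 0 & a\end{smallmatrix}\right)\}$, which is abelian, so $[\wt\ga,\wt\ga]=0\neq[\ga,\ga]$. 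The other natural complement $\ha_1=\{(0,z)\}$ does work. So the complement must be chosen to act faithfully (modulo the centre of $[\ga,\ga]$) on $[\ga,\ga]/[[\ga,\ga],[\ga,\ga]]$, which is precisely the transversality condition you anticipate; neither your sketch nor the paper actually supplies this choice or proves that an algebraic complement with this property exists. It is worth noting, however, that the downstream use of the proposition (Theorem~\ref{thm:no_F4E8} via Corollary~\ref{cor:Levi}) only requires that $\wt\ga$ and $\ga$ share a maximal semisimple subalgebra, and this already follows from $[\ga^{\rm a},\ga^{\rm a}]\subseteq\wt\ga\subseteq\ga^{\rm a}$ for \emph{any} choice of complement, without the full derived-subalgebra equality.
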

\begin{proof}
Let $\ga^{\rm a}\subset\gl_n(\kk)$ denote the algebraic hull of
$\ga$ (the smallest algebraic subalgebra containing $\ga$).
We have $[\ga,\ga]=[\ga^{\rm a},\ga^{\rm a}]$,
cf.~Chevalley \cite[Proposition 1]{Chevalley47}.
Let $\ha'\subset\ga^{\rm a}$ stand for the annihilator of $o$.
Then $\ha'$ is an algebraic subalgebra of~$\ga^{\rm a}$.

Let $\aa=\ga^{\rm a}/[\ga,\ga]$.
Consider the canonical map $\pi:\ga^{\rm a}\to\aa$.
Since an algebraic subalgebra of a commutative algebraic Lie algebra has a
complementary algebraic subalgebra, also defined over $\kk$, there is an
algebraic subalgebra $\ha_1\subset\aa$ such that
$\aa=\pi(\ha')\oplus\ha_1$.
Set $\wt\ga=\pi^{-1}(\ha_1)$ and $\wt\ha=\wt\ga\cap\ha'$.
We have
\[
\wt\ha=[\ga,\ga]\cap\ha'=[\ga,\ga]\cap\ha.
\]
The fact that $\wt\ga$ is \pre\ follows from
\[
\dim\wt\ga=\dim\ga^{\rm a}-\dim\pi(\ha')=n+\dim\ha'-\dim\pi(\ha')=n+\dim\wt\ha.
\qedhere
\]
\end{proof}

\begin{cor}\label{cor:Levi}
For every \et\ Lie algebra there exists an algebraic \et\ Lie
algebra over $\kk$ with the same derived subalgebra (and the
same maximal semisimple subalgebra).
\end{cor}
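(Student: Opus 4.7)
The plan is to derive this corollary as a direct consequence of Proposition \ref{prop:hull}, specialized to the étale setting. Recall that an étale Lie algebra $\ga\subset\gl_n(\kk)$ is a prehomogeneous one whose orbit map $\beta:\ga\to\kk^n$, $X\mapsto Xo$, is an isomorphism; equivalently, its generic stabilizer satisfies $\ha=\ker\beta=0$.

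First I would feed $\ga$ into Proposition \ref{prop:hull}. This yields an algebraic prehomogeneous Lie algebra $\wt\ga\subset\gl_n(\kk)$ satisfying
\[
[\ga,\ga]=[\wt\ga,\wt\ga]\quad\text{and}\quad \wt\ha=\ha\cap[\ga,\ga].
\]
Since $\ha=0$, the second identity forces $\wt\ha=0$, so $\wt\ga$ is prehomogeneous with trivial generic stabilizer, i.e., $\wt\ga$ is itself an algebraic étale Lie algebra. The identity of derived subalgebras is then precisely the first assertion of the proposition.

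It remains to identify the maximal semisimple subalgebras. Here I would invoke Levi's theorem: any maximal semisimple subalgebra $\sa$ of a Lie algebra $\la$ is perfect, so $\sa=[\sa,\sa]\subseteq[\la,\la]$. Consequently the maximal semisimple subalgebra of $\la$ is also the maximal semisimple subalgebra of $[\la,\la]$. Applying this to both $\ga$ and $\wt\ga$ and using $[\ga,\ga]=[\wt\ga,\wt\ga]$, we conclude that $\ga$ and $\wt\ga$ share the same maximal semisimple subalgebra.

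There is essentially no obstacle here: the whole content has been packaged into Proposition \ref{prop:hull}, and the corollary is a one-line specialization together with the standard Levi-theoretic observation that semisimple subalgebras lie in the derived subalgebra. The only mild subtlety worth flagging is the characteristic-zero hypothesis, which is needed both for Levi's theorem and for the algebraic complement used in the proof of Proposition \ref{prop:hull}; both are already in force by the standing assumption on $\kk$.
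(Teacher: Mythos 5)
Your proof is correct and is evidently the intended derivation: the paper states the corollary without separate proof, and it follows exactly as you say by specializing Proposition~\ref{prop:hull} to $\ha=0$ (so $\wt\ha=0\cap[\ga,\ga]=0$ and $\wt\ga$ is \'etale), together with the observation that semisimple subalgebras are perfect and hence lie in the derived subalgebra, so equal derived subalgebras force equal maximal semisimple subalgebras. (Minor quibble: the fact that a semisimple subalgebra is perfect is not Levi's theorem, just perfectness of semisimple Lie algebras; invoking Levi is unnecessary here.)
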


If $X\mapsto Xo$ is an isomorphism over $\kk$ then it is such over
any extension field of $\kk$. Hence:

\begin{prop}\label{prop:base_change}
Let $\ol{\kk}$ denote an extension field of $\kk$.
A Lie algebra $\ga\subset\gl_n(\kk)$ is \et\ if and only if 
$\ga\otimes_{\kk}\ol{\kk}\subset\gl_n(\ol{\kk})$ is \'etale.
\end{prop}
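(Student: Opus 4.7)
The plan is to split into the two implications, treating the easy one quickly and concentrating on producing a $\kk$-rational base point in the harder direction.

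For the forward direction, suppose $\ga\subset\gl_n(\kk)$ is \'etale, witnessed by $o\in\kk^n$ for which $\beta:\ga\to\kk^n$, $X\mapsto Xo$, is a $\kk$-linear isomorphism. Since $\beta\otimes_{\kk}\ol{\kk}$ is just the same map computed on the base-changed spaces, it remains a $\ol{\kk}$-linear isomorphism, and the same $o\in\kk^n\subset\ol{\kk}^n$ witnesses \'etaleness of $\ga\otimes_{\kk}\ol{\kk}$. This is exactly the one-line remark the author inserts before the statement.

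For the converse, suppose $\ga\otimes_{\kk}\ol{\kk}$ is \'etale. Base change preserves dimension, so $\dim_{\kk}\ga=\dim_{\ol{\kk}}(\ga\otimes_{\kk}\ol{\kk})=n$, and it suffices to exhibit any $o\in\kk^n$ at which the evaluation map $\beta_o:\ga\to\kk^n$, $X\mapsto Xo$, is injective (equivalently, an isomorphism). I would fix a $\kk$-basis $X_1,\ldots,X_n$ of $\ga$ and consider the polynomial
\[
P(o)=\det\bigl(X_1 o\mid X_2 o\mid\cdots\mid X_n o\bigr)\in\kk[o_1,\ldots,o_n],
\]
whose non-vanishing at $o$ is equivalent to $\beta_o$ being an isomorphism. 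The same $X_1,\ldots,X_n$ are an $\ol{\kk}$-basis of $\ga\otimes_{\kk}\ol{\kk}$, and \'etaleness over $\ol{\kk}$ produces some $o\in\ol{\kk}^n$ with $P(o)\neq 0$, so $P$ is not the zero polynomial.

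The main (and only real) obstacle is converting this $\ol{\kk}$-point into a $\kk$-point. This is where the standing hypothesis $\ch\kk=0$ enters: $\kk$ is infinite, and a non-zero polynomial over an infinite field has a non-vanishing value on $\kk^n$. Picking such a $o\in\kk^n$ yields an isomorphism $\beta_o:\ga\to\kk^n$ over $\kk$, proving $\ga$ is \'etale. No structural theory is needed beyond this density argument; the content of the statement is really that \'etaleness is a Zariski-open condition cut out over the prime field of definition.
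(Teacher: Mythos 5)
Your proof is correct. The forward implication reproduces exactly the paper's one-line observation preceding the proposition: the evaluation map $X\mapsto Xo$ remains an isomorphism after extension of scalars, so the same $o$ witnesses \'etaleness over $\ol{\kk}$. The paper in fact stops there; the ``Hence:'' only justifies that direction, which is also the only direction invoked later (in the proof of Theorem \ref{thm:no_F4E8}, going from a real \'etale module to a complex one). Your treatment of the converse is a genuine addition and it is sound: base change preserves dimension so $\dim_{\kk}\ga=n$, the determinant polynomial $P\in\kk[o_1,\ldots,o_n]$ built from a $\kk$-basis of $\ga$ cuts out exactly the locus where $\beta_o$ fails to be an isomorphism, $P\neq0$ because it has a non-vanishing $\ol{\kk}$-point, and the infinitude of $\kk$ (the only role characteristic $0$ plays here) yields a $\kk$-rational point with $P(o)\neq0$. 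This is the standard ``a non-empty $\kk$-defined Zariski-open set has a $\kk$-point when $\kk$ is infinite'' argument, and it completes the biconditional; your write-up is therefore more thorough than the paper's own, which asserts both directions but explicitly argues only one.
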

%\begin{proof}
%This follows from the fact that free modules are faithfully flat
%and thus the map $X\mapsto Xo$ is
%an isomorphism over $\kk$ if and only if it is such over $\ol{\kk}$.
%\end{proof}

%%%%%%%%%
\section[Non-existence for groups with simple factors $\F_4$ and $\E_8$]{Non-existence of \'etale modules for groups with simple factors $\F_4$ or $\E_8$}
\label{sec:no_F4E8}

For an arbitrary Lie group $G$ to have a (real, finite-dimensional)
\emph{\'etale} module $V$ means that $G$ has an open orbit
in $V$ in the standard topology of $V$ and $\dim G=\dim V$.
We use the results of the previous section and the Sato-Kimura
classification of algebraic prehomogeneous modules to establish the
following non-existence result:

\begin{thm}\label{thm:no_F4E8}
Let $G$ be a real Lie group with Lie algebra
$\ga$ and a linear action on a finite-dimensional real vector
space $V$.
If the module $(G,V)$ is \'etale, then a maximal semisimple
subalgebra of $\CC\otimes\ga$ does not contain simple factors
$\fa_4$ or $\ea_8$.
\end{thm}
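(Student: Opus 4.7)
The plan is to reduce the statement to an assertion about complex algebraic \'etale Lie algebras, to which the Sato--Kimura classification of prehomogeneous vector spaces can be applied.

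First I would observe that if $(G,V)$ is an \'etale module with an orbit open in the standard topology and $\dim G=\dim V$, then for any $v$ in that orbit the differential of the orbit map $g\mapsto gv$ at the identity is an isomorphism $\ga\to V$. Thus $\ga\subset\gl(V)$ is \'etale in the sense of Section~\ref{sec:base_change}. By Corollary~\ref{cor:Levi} I can replace $\ga$ by an algebraic \'etale Lie algebra $\wt\ga$ over $\RR$ with the same maximal semisimple subalgebra, and by Proposition~\ref{prop:base_change} its complexification $\wt\ga\otimes_\RR\CC$ remains \'etale. Since complexification preserves the simple factors of a maximal semisimple subalgebra, it therefore suffices to prove the statement for complex algebraic \'etale Lie algebras.

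Given such a $\wt\ga$, let $\wt G$ denote the corresponding complex algebraic group, and apply Proposition~\ref{prop:stabilizers} iteratively to decompose the $\wt G$-module $V_\CC$ into a chain of prehomogeneous pieces for successive generic stabilisers. A simple factor $\fa_4$ or $\ea_8$ of the maximal semisimple subalgebra of $\wt\ga$ must act non-trivially on at least one of these pieces. Castling transforms preserve the simple factors of the acting group other than the $\GL_k$-factor being modified, so after castling reduction I can consult the Sato--Kimura and Kimura classifications (\cite[Ch.~7 and Appendix]{Kimura03}) directly to determine which entries could realise $F_4$ or $E_8$ as a simple factor and, for each such candidate, check the \'etale dimension balance $\dim\wt G=\dim V_\CC$.

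The main obstacle is this last combinatorial verification. For $E_8$ it should be essentially immediate, since the smallest non-trivial irreducible $E_8$-representation is the $248$-dimensional adjoint, whose generic stabiliser is an $8$-dimensional Cartan subalgebra; hence no summand on which $E_8$ acts faithfully can be prehomogeneous in the first place, even after castling. For $F_4$ the work is more delicate: one must run through its fundamental representations and their castling companions and, using the generic stabiliser tables in \cite[Appendix]{Kimura03}, verify in each case that the stabiliser of an $F_4$-summand is too large to be absorbed by the remaining factors of $\wt G$, so that the dimensions cannot balance to give an \'etale module.
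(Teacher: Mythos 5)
Your reduction from real to complex algebraic Lie algebras using Corollary~\ref{cor:Levi} and Proposition~\ref{prop:base_change} matches the paper's opening step (the paper complexifies first and then algebraizes, but either order works). After that, however, your argument diverges and has two genuine gaps.

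The first gap is structural. You propose to ``apply Proposition~\ref{prop:stabilizers} iteratively to decompose the $\wt G$-module $V_\CC$ into a chain of prehomogeneous pieces for successive generic stabilisers.'' Proposition~\ref{prop:stabilizers} requires the module to split as a direct sum $V_1\oplus V_2$ of submodules. After the reduction of Section~\ref{sec:base_change} the algebraic group $\wt G$ is in general \emph{not} reductive, so $V_\CC$ need not be completely reducible, and no such direct-sum decomposition is available. The paper confronts exactly this issue and replaces the direct-sum step by an analysis of short exact sequences $0\to U\to V\to W\to 0$ of modules (Proposition~\ref{prop:reduce}), together with a lemma on the solvable radical having an open orbit (Lemma~\ref{lem:R_orbit}) to handle the case when every irreducible quotient of $V$ is one-dimensional. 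These are essential and are not recoverable from Proposition~\ref{prop:stabilizers} alone.

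The second gap is the claim about $\E_8$: you write that since the $248$-dimensional adjoint has an $8$-dimensional generic stabiliser, ``no summand on which $\E_8$ acts faithfully can be prehomogeneous in the first place, even after castling.'' This is false. The module $\bigl(\E_8\times\GL_n,\ \CC^{248}\otimes\CC^n\bigr)$ for $n\ge 248$ is prehomogeneous --- it is \emph{casual} in the paper's terminology (case I~(1) of the Sato--Kimura list), with generic stabiliser having reductive part $\E_8\times\GL_{n-248}$. The point is not that such modules fail to be prehomogeneous; it is that every irreducible prehomogeneous module for a reductive group with a factor $\F_4$ or $\E_8$ is castling-equivalent to a casual one, and a casual module, if \'etale, forces all simple factors to be of type $\SL$ (Remark~\ref{rem:casual}). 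The non-trivial content is Proposition~\ref{prop:noncasual}, which guarantees that an \'etale module with a simple factor $Q\not\cong\SL_n$ yields an irreducible quotient on which $Q$ acts non-trivially and which is \emph{not} castling-equivalent to a casual module. Without this casual/non-casual distinction and the inductive argument of Proposition~\ref{prop:noncasual}, the proposed ``combinatorial verification'' via dimension-balancing does not close the argument --- a casual summand with an $\F_4$ or $\E_8$ factor can always balance dimensions against other factors, and it is only the structure theory (not arithmetic) that rules this out.
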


The proof needs some preparations.

\begin{prop}\label{prop:reduce}
Let $G$ be a linear algebraic group.
Given a short exact sequence of $G$-modules
\begin{equation}\label{eqn:exact}
0\longrightarrow U\longrightarrow V\stackrel{\pi}{\longrightarrow} W\longrightarrow 0
\end{equation}
where $V$ is prehomogeneous with a point $o$ in general position,
let $G'$ be the stabilizer in $G$ of the line spanned by $\pi(o)\in W$. Then $G'$ preserves $U':=U+\langle o\rangle$ and has an open orbit on it. Moreover, the stabilizer $H$ of $o$
in $(G,V)$ is also the stabilizer of $o$ in $(G',U')$.
\end{prop}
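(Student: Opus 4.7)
The plan is to verify the three assertions in turn, with the main work going into the openness claim via a dimension count. Throughout I assume $\pi(o)\neq 0$; otherwise $o\in U$, and the open orbit $G\cdot o$ would be contained in the proper subspace $U$, forcing $U=V$, $W=0$, and making the proposition trivial.

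First I would show that $G'$ preserves $U'$. Fix $g\in G'$; by definition of $G'$ there is a scalar $\lambda(g)\in\kk^\times$ with $g\cdot\pi(o)=\lambda(g)\pi(o)$. Applying $\pi$ to $g\cdot o-\lambda(g)o$ yields $0$ by $G$-equivariance of $\pi$, so $g\cdot o\in U+\langle o\rangle=U'$. Since $U$ is $G$-stable, hence $G'$-stable, and $g\cdot o\in U'$, we get $g\cdot U'\subseteq U'$. The stabilizer claim is essentially immediate: if $h\in H$ then $h$ fixes $\pi(o)$, and therefore the line through $\pi(o)$, so $h\in G'$; consequently $H\subseteq G'$, and the stabilizer of $o$ computed inside $G'$ is $G'\cap H=H$.

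For the remaining (and main) task, that $G'$ has an open orbit on $U'$, the approach is a dimension count showing $\dim G'\cdot o=\dim U'$. The linear surjection $\pi$ is open, so $G\cdot\pi(o)=\pi(G\cdot o)$ is open in $W$; consequently its projective image $G\cdot[\pi(o)]$ is open in $\mathbb{P}(W)$ and has dimension $\dim W-1$. This yields $\dim G'=\dim G-\dim W+1$. Combined with $\dim H=\dim G-\dim V=\dim G-\dim U-\dim W$ and the identification $H=G'_o$ from the previous step, the orbit $G'\cdot o\subseteq U'$ has dimension
\[
\dim G'-\dim H=\dim U+1=\dim U'.
\]
Since any algebraic group orbit is locally closed and $U'$ is irreducible, an orbit of maximal dimension inside $U'$ is forced to be open in $U'$, giving the claim.

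The step I view as the main obstacle is the dimension bookkeeping through the projectivization: one must pass from the open orbit of $G$ on $V$ to the open orbit of $G$ on $\mathbb{P}(W)$ to extract the correct codimension of $G'$ in $G$. Once this is set up, everything else reduces to linear algebra and the openness of a surjective linear morphism, and the two preservation statements fall out of $G$-equivariance of $\pi$.
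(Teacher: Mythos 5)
Your proof is correct and follows essentially the same route as the paper's: both establish $G'$ preserves $U'$ by equivariance of $\pi$, observe $H\subseteq G'$, and then carry out the identical dimension count via the open $G$-orbit on $\P W$ to get $\dim G - \dim G' = \dim W - 1$ and hence $\dim G'o = \dim U'$. The only difference is that you spell out the $U'$-invariance and the degenerate case $\pi(o)=0$ explicitly, where the paper dismisses these as immediate.
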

\begin{proof}
Note that $o\nin U$ since $o$ is in general position.
The fact that $G'$ preserves $U'$ follows immediately from definitions and the property $\Ker\pi=U$. Note that $H\subset G'$. It remains to show that the orbit $G'o\subset U'$ is open. Since $(G,V)$ is \pre, so is $(G,W)$. Hence, the action of $G$ on the
projective space $\P W$ over $W$ has an open orbit and its generic stabilizer is conjugate to~$G'$. We conclude $\dim G-\dim G'=\dim W-1$, and therefore
\begin{align*}
\dim G'o=\dim G'-\dim H
&=\dim V- (\dim G-\dim G')\\
&=\dim V-\dim W+1=\dim U'.
\qedhere
\end{align*}
\end{proof}

\begin{lemma}\label{lem:R_orbit}
Let $(G,V)$ be a prehomogeneous module for an algebraic group
$G$ with solvable radical $R$.
Assume there exists an irreducible submodule $U$ of
codimension $1$ in $V$ that is not a direct summand of $V$.
Then $(R,V)$ is prehomogeneous.
\end{lemma}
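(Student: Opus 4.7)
The plan is to use the Levi decomposition $G = L\ltimes\Ru(G)$ (with $L$ reductive, so that the solvable radical is $R = \Zen(L)^\circ\cdot\Ru(G)$) and to combine a cocycle argument for $\Ru(G)$ with a character argument on the quotient $V/U$. We may assume $G$ is connected, since prehomogeneity for $G$ is equivalent to prehomogeneity for $G^\circ$.

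Because $U$ is $G$-irreducible and the fixed space of a connected normal unipotent subgroup is a nonzero $G$-submodule, $\Ru(G)$ acts trivially on $U$; the same argument applies to the one-dimensional quotient $V/U$. Since $L$ is reductive, $V$ splits as an $L$-module in the form $V = U\oplus\CC v_0$, with $L$ acting on $v_0$ via a character $\chi\colon L\to\CC^*$. Defining $\phi\colon\Ru(G)\to U$ by $u\cdot v_0 = v_0 + \phi(u)$, one checks that $\phi$ is a group homomorphism and that $\phi(\Ru(G))$ is $L$-invariant, via the identity $l\phi(u) = \chi(l)\phi(lul^{-1})$. Since $U$ is also irreducible as an $L$-module (because $\Ru(G)$ already acts trivially), $\phi(\Ru(G))$ is either $0$ or all of $U$; if it were $0$, then $V$ would split as a $G$-module, contradicting the hypothesis. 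Hence $\phi(\Ru(G))=U$.

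To finish, prehomogeneity of $(G,V)$ together with $\dim V/U = 1$ forces $\chi$ to be nontrivial, and since $\chi$ vanishes on $\Ru(G)$ and on $[L,L]$, its restriction to $\Zen(L)^\circ$ surjects onto $\CC^*$. Take a point in general position $x = u_0 + t_0 v_0$ (necessarily with $t_0\neq 0$, else $Gx\subseteq U$). For $z\in\Zen(L)^\circ$ and $u\in\Ru(G)$ one computes
\[
(zu)\cdot x = z u_0 + t_0\, z\phi(u) + t_0\chi(z)\, v_0,
\]
so that, for fixed $z$, the first two summands sweep out all of $U$ as $u$ varies (using $\phi(\Ru(G))=U$), while $t_0\chi(z)$ sweeps out $\CC^*$ as $z$ varies. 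Thus $Rx\supseteq V\setminus U$, an open dense subset, proving $(R,V)$ is prehomogeneous. The main obstacle will be the $L$-invariance (hence surjectivity) of $\phi$: this is the step that translates the non-splitting hypothesis into the concrete statement that $\Ru(G)$ reaches the full hyperplane $U$, which is precisely what allows $R$ to cover $V\setminus U$; once this is in hand, the character argument and explicit orbit calculation are routine.
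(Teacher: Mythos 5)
Your proof is correct and takes essentially the same approach as the paper: both arguments show that $\Ru(G)$ moves the transverse vector $v_0$ across all of $U$ (using that the image is $L$-invariant, $U$ is $L$-irreducible, and the extension does not split), and that the connected center of a Levi factor scales the complementary line via a nontrivial character. The only difference is that you phrase this via a group cocycle $\phi$ and compute the $R$-orbit explicitly, whereas the paper computes the tangent space $\mathfrak{r}x=V$ at the Lie algebra level; these are interchangeable formalizations of the same idea.
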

\begin{proof}
%We may assume that $G$ acts faithfully on $V$.
Let $W=V/U$ denote the one-dimensional quotient module for
$G$. Note that $W$ is prehomogeneous since $V$ is.
Let $\Ru(G)$ denote the unipotent radical of $G$, and
$A$ the center of $L(G)$, so that $R=A\cdot \Ru(G)$ and
$L(G)=A\cdot S(G)$.
Let $\ra=\Lie R$, $\rau=\Lie\Ru(G)$ and $\aa=\Lie A$.
Let $x$ be a non-zero point in a one-dimensional $L(G)$-invariant
complementary subspace $W'$ to $U$ in $V$.
We will show that $Rx\subset V$ is open.
It suffices to show that $\ra{x}=V$.

Note that $\Ru(G)$ acts trivially on $U$, as the
eigenspace for eigenvalue $1$ of $\Ru(G)$ is $G$-invariant
and non-zero, hence all of $U$ by irreducibility.
It follows that $U$ is $L(G)$-irreducible.
Also, both $\Ru(G)$ and $S(G)$ act trivially on the
one-dimensional module $W$.

Since $U$ is not a direct summand in $V$,
$\rau x$ is a non-zero subspace of $U$.
Moreover, $\rau x$ is $L(G)$-invariant, and hence coincides
with $U$, since the latter is $L(G)$-irreducible.
%$\Ru(G)x$ has a non-zero component in $U$.
%Now it is easily verified that $\Ru(G)$ is abelian and two-step
%unipotent
%(for example, consider a suitable matrix representation).
%%and $1\leq \dim\Ru(G)\leq\dim U$.
%It follows that $\Ru(G)x=x+\rau x$.
%
%As $U$ is $G$-irreducible and $\Ru(G)$-trivial, it follows that
%$U$ is $S(G)$-irreducible.
%So for each non-zero $u\in U$, the orbit $S(G) u$ contains a
%basis of $U$.
%Also, for $g\in\Ru(G)$ with $gx=x+u$, for any $s\in S(G)$
%we have $(sgs^{-1})x=x+su$.
%It follows that $\rau x=U$.
Since $\Ru(G)$ and $S(G)$ act trivially on $W$,
the prehomogeneity of $W$ requires that $A$ acts non-trivially
on $W$ and hence on $x$.
So $\aa x= W'$, and it follows that
$\ra x=\aa x+\rau x= W'+U=V$.
\end{proof}

Given a linear algebraic group $G$ and a rational $G$-module
$V$, we call $(G,V)$ \emph{casual}\footnote{It is called \emph{trivial} in \cite[Definition 5, p.~43]{SK77}. We decided to use another term to avoid confusions.}
if it is equivalent to $(G'\times\GL_n, V'\otimes\C^n)$ for
an algebraic subgroup $G'\subset\GL(V')$ and $n\geq\dim V'$.
All such modules are \pre\ with generic stabilizer $H$ satisfying
$L(H)\cong L(G')\times\GL_{n-\dim{V'}}$, and the irreducible ones
are given by cases I (1) and III (1)
in the Sato-Kimura classification \cite[\S 7]{SK77}.

\begin{rmk}\label{rem:casual}
A module that is equivalent to
a casual irreducible \'etale module is necessarily
equivalent to $(F\times\GL_n,\C^n\otimes\C^n)$ for some finite
group $F$ acting irreducibly on $\C^n$.
If $(G,V)$ is castling-equivalent to such a module, then it
follows immediately that all simple factors of $S(G)$
are special linear groups.
\end{rmk}

%We call a connected simple (non-commutative) algebraic subgroup $Q\subset G$ a \emph{simple factor} of $G$, if $Q$ is normal in $S(G)$ and $\dim Q>0$. 

%\begin{prop}\label{prop:noncasual}
%Let $(G,V)$ be an \et\ module and $Q\subset G$ be a simple factor not isomorphic to $\SL_n$ for any~$n$. Then there is an \et\ module $(\wt{G},\wt{V})$ with a simple factor $\wt{Q}\cong Q$ and a simple quotient $\wt{W}$ of $\wt{V}$ such that $\wt{Q}$ acts non-trivially on~$\wt{W}$ and $\wt{W}$ is not castling equivalent to a casual module.
%\end{prop}

\begin{prop}\label{prop:noncasual}
Let $(G,V)$ be an \'etale module for a linear
algebraic group $G$, and let $Q$ be a simple factor of
$S(G)$ not isomorphic to $\SL_n$ for any $n$.
There exists an \'etale module $(\wt{G},\wt{V})$ with a simple factor
$\wt{Q}\cong Q$ in $S(\wt{G})$ and an irreducible quotient module
$\wt{W}$ of $\wt{V}$ such that $\wt{Q}$ acts non-trivially on $\wt{W}$
and $\wt{W}$ is not castling-equivalent to a casual module.
\end{prop}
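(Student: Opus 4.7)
The plan is to argue by induction on $\dim V$. First, I would replace $G$ by its identity component $G^\circ$: the module remains étale, and $Q$ remains a simple factor of $S(G^\circ)=S(G)$. So we may assume $G$ is connected, which makes $Q$ a normal subgroup of $G$. Moreover, since $\dim G=\dim V$ and the action has an open orbit, the kernel of $G\to \GL(V)$ is finite, so the positive-dimensional group $Q$ acts non-trivially on $V$.

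The normality of $Q$ yields a $G$-invariant decomposition $V=V^Q\oplus V^{(Q)}$ into $Q$-fixed vectors and the sum of non-trivial $Q$-isotypic components, with $V^{(Q)}\neq 0$. Any irreducible $G$-quotient $\bar W$ of $V^{(Q)}$ has no trivial $Q$-summand (as a $Q$-quotient of a fixed-point-free module), so $Q$ acts non-trivially on $\bar W$; composing with $V\twoheadrightarrow V^{(Q)}$ realizes $\bar W$ as an irreducible quotient of $V$ on which $Q$ acts non-trivially.

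If some such $\bar W$ is not castling-equivalent to a casual module, then $(\wt G,\wt V,\wt W)=(G,V,\bar W)$ satisfies all required conditions. In particular, this handles the base case where $V$ is itself irreducible: if $\bar W=V$ were castling-equivalent to casual, then by Remark~\ref{rem:casual} the module would be equivalent to $(F\times\GL_m,\C^m\otimes\C^m)$ for a finite group $F$, forcing every simple factor of $S(G)$ to be an $\SL_m$, contradicting the hypothesis $Q\not\cong\SL_m$.

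Otherwise, every irreducible $Q$-non-trivial quotient $\bar W$ of $V$ is castling-equivalent to some casual module $(H'\times\GL_n,V'\otimes\C^n)$ with $n\geq\dim V'$. I would then apply Proposition~\ref{prop:reduce} to the short exact sequence $0\to U\to V\to \bar W\to 0$ to obtain an étale module $(G',U')$ of strictly smaller dimension (strict since $\dim\bar W\geq 2$, as the semisimple group $Q$ admits no non-trivial $1$-dimensional representations), and apply the induction hypothesis to $(G',U')$. The main obstacle lies in verifying that a simple factor isomorphic to $Q$ persists in $S(G')$: since $Q\not\cong\SL_m$, the castling-equivalence of $\bar W$ to its casual form forces $Q$ to embed into $H'$ rather than into the $\GL_n$-factor, and a direct calculation in the casual model shows that the stabilizer in $H'\times\GL_n$ of the line through $[I_{\dim V'}\mid 0]$ contains all of $H'$ diagonally, hence contains $Q$. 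Pulling this subgroup back through the castling transforms then identifies $Q$ as a normal simple subgroup of $G'$; this careful bookkeeping under castling-equivalence is the principal technical challenge of the proof.
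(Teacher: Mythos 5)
Your reduction to $G^\circ$ is fine, but the claim that connectedness ``makes $Q$ a normal subgroup of $G$'' is false, and this is where the argument breaks. By the paper's conventions, $S(G)$ is the commutator subgroup of a Levi factor $L(G)$, and $L(G)$ (hence $S(G)$, hence $Q$) is only determined up to conjugation; it is \emph{not} normal in $G$ whenever the unipotent radical is non-trivial. A concrete instance: in $G=\GL_2\ltimes\CC^2$, $S(G)=\SL_2$ is not normal. Since $Q$ need not be normal, the $Q$-isotypic decomposition $V=V^Q\oplus V^{(Q)}$ is not $G$-invariant, and $V^{(Q)}$ is neither a $G$-submodule nor a $G$-quotient of $V$. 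Your key device --- manufacturing an irreducible $G$-quotient $\bar W$ of $V$ on which $Q$ acts non-trivially, hence with $\dim\bar W\geq 2$ --- therefore fails.

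This is not a cosmetic gap: the paper must do real work precisely in the regime your argument assumes away, namely when every irreducible $G$-quotient of $V$ is one-dimensional (so that $S(G)$, and in particular $Q$, acts trivially on every irreducible quotient). In that case the paper takes a one-dimensional quotient $W=V/U$, a maximal proper submodule $U_0\subset U$, and forms the two-step quotient $W_1=V/U_0$. It then shows, via Lemma~\ref{lem:R_orbit}, that the \emph{solvable radical} $R$ already has an open orbit in $W_1$ (using that $\Ru(G)$ must move a general point into the irreducible submodule $W_0=U/U_0$ when the extension is non-split), so that $S(G)$ lands in the stabilizer of a general point of $W_1$; only then does Proposition~\ref{prop:reduce} yield a strictly smaller \'etale module carrying $Q$. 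Your draft has no counterpart to this step. The rest of your outline for the other case (castling-equivalent-to-casual quotients of dimension $\geq 2$, plus the observation that $Q\not\cong\SL_m$ forces $Q$ to survive inside the generic stabilizer) matches the paper's route, but the missing one-dimensional-quotient case and the unjustified normality of $Q$ mean the proof as written does not go through.
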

\begin{proof}
We prove the claim by induction on $\dim V$.
Note that since $\dim Q > 1$ the module cannot be \'etale in the case
$\dim V=1$, so the claim holds trivially.

Suppose now that $\dim V\geq 2$.
If $V$ is irreducible, then $(G,V)=(\wt{G},\wt{V})$ satisfies the
claim in light of Remark \ref{rem:casual}.
So we may further assume that $V$ is not irreducible.

Assume that there is an irreducible quotient $W=V/U$ with $\dim W\geq 2$. If $Q$ acts non-trivially on $W$ and $(G,W)$ is not castling-equivalent to a casual module, we can put $(\wt{G},\wt{V}):=(G,V)$ and $\wt{Q}:=Q$.
Otherwise, either $Q$ acts trivially on $W$ or $(G,W)$ is castling-equivalent to a casual module. Then $S(G_x)$ contains a factor isomorphic to $Q$, where $x\in W$ is a point in general
position.
In this case, if $(G',U')$ is as in Proposition
\ref{prop:reduce}, then $(G',U')$ is \'etale and $G'$ contains a
conjugate of $Q$.
%prehomogeneous
%with generic stabilizer $\wt{H}$ conjugate to $H$, and in
%particular \'etale.
Since $\dim U'=1+\dim U<\dim V$, the claim now follows by
induction on $\dim V$.
%The rest follows by induction and Proposition~\ref{prop:reduce}. 

Suppose now that all irreducible quotients of $V$ are
one-dimensional, and let $W=V/U$ be one of them.
%If $U$ is irreducible, let $x$ be a point in general position.
%Then choose $\wt{V}=U$ and $\wt{G}$ to be the
%stabilizer of the projection $\pi_W(x)$ of $x$ to $W$.
%Since $\dim W=1$, $S(G)\subset\wt{G}$.
%Moreover, $H$ fixes $\pi_W(x)$, so $H\subset\wt{G}$
%as well. Since $(G,V)$ is \'etale, comparing dimensions shows that
%$(\wt{G},\wt{V})$ is \'etale as well.
%Moreover, $(\wt{G},\wt{V})$ is not equivalent to a casual module,
%since a casual irreducible \'etale module has only special linear
%groups as simple factors in $S(\wt{G})$,
%which contradicts our assumption on $Q$.
%Thus $(\wt{G},\wt{V})$ satisfies the claim with $\wt{W}=\wt{V}$.
%Suppose now that $U$ is not irreducible.
There exists a maximal proper
submodule $U_0\subset U$, so that $W_0:=U/U_0$ is 
irreducible, and for $W_1:=V/U_0$ we have the exact sequence
\[
0\longrightarrow W_0\longrightarrow W_1\longrightarrow W\longrightarrow 0.
\]
Note that $W_1$ is prehomogeneous since $V$ is.
We claim that the solvable radical $R$ of $G$ has an open orbit in
$W_1$.
If $W_0$ is a direct summand in $W_1$, then by the assumption that all quotients of $V$ are one-dimensional, $\dim W_0 = 1$, and therefore $S(G)$ acts trivially on $W_1$,
implying that the open $G$-orbit is also an open $R$-orbit.
Suppose $W_0$ is not a direct summand in $W_1$.
Since $W$ and $W_0$ are both irreducible, we can apply
Lemma \ref{lem:R_orbit} (with $V$ replaced by $W_1$) to
conclude that $R$ has an open orbit in $W_1$.
%\WG{from here...} %%%%%%%%%%%%%%%
%We will show that $Rx\subset W_1$ is open, where $x\neq0$ is a
%point in a one-dimensional $L(G)$-invariant complementary subspace
%$W'$ to $W_0$ in $W_1$.
%It suffices to show that $\ra{x}=W_1$.
%%where $\ra=\Lie{R}$ and $x\neq0$ is a point in an $L(G)$-invariant complementary subspace to $W_0$ in $W_1$.
%Note that $\ra{x}\subset W_1$ is $L(G)$-invariant, as
%$\dim W'=1$.
%Since $W_0\subset W_1$ is the largest proper $G$-submodule, it suffices to show that $\ra{x}\nsubset W_0$. Since $W\cong\C$ has an open $G$-orbit and $S(G)$ acts trivially there, it follows that the image of $\ra{x}\subset W_1$ in $W$ equals $W$. Hence, $\ra{x}$ does not lie in~$W_0$. We conclude that $Rx\subset W_1$ is open, 
%and that the generic stabilizer of $(G,W_1)$ contains a
%conjugate of $S(G)$.
%\WG{...to here, see ``scratchpad'' at the end}  %%%%%%%%%
%In particular, $\dim U_0\geq 1$, since otherwise $S(G)$ would act
%trivially on $V=W_1$, contradicting our assumptions.
Therefore, $S(G)$ belongs to the stabilizer of a point in general
position in $W_1$.
We can now use Proposition \ref{prop:reduce} (with $W$, $U$ replaced by
$W_1$, $U_0$) and induction to derive the statement.
%This implies the statement by induction and Proposition~\ref{prop:reduce}.
\end{proof}

\begin{proof}[Proof of Theorem \ref{thm:no_F4E8}]
If $(G,V)$ is a real \'etale module, then by Proposition
\ref{prop:base_change} there exists a complex
\'etale module $(G_\CC,V_\CC)$ where $G_\CC$ is a Lie group
with Lie algebra $\CC\otimes\ga$.
So by Proposition \ref{prop:hull}, we may assume that
$(G,V)$ is a complex algebraic \'etale module.
According to the classification of irreducible prehomogeneous
modules for reductive algebraic groups \cite[\S 7]{SK77},
all irreducible pre\-homogeneous modules for reductive algebraic
groups with $\F_4$ or $\E_8$ as a simple factor are
castling-equivalent to a casual module.
It remains to apply Proposition \ref{prop:noncasual} to $(G,V)$.
\end{proof}

\bibliographystyle{myspmpsci}
\bibliography{paper}
\end{document}